\def\section{\@startsection{section}{1}%
  \z@{2.5ex plus 1ex minus .2ex}{1.5ex plus .2ex}%
  {\centering\normalfont\Large\scshape}}
\newtheorem{theorem}{Theorem}[section]
\newtheorem{remark}[theorem]{Remark}
\newtheorem{assumption}[theorem]{Assumption}
\newtheorem{lemma}[theorem]{Lemma}
\newtheorem{proposition}[theorem]{Proposition}
\newtheorem{definition}[theorem]{Definition}
\theoremstyle{plain}
\def \R{\mathbb{R}}
\def \N{\mathbb{N}}
\def \E{{\mathbb{E}}}
\def \and{\quad \text{and} \quad}
\DeclareMathOperator*{\argmax}{arg\,max}
\DeclareMathOperator*{\essinf}{ess\,inf}
\numberwithin{equation}{section}
 \newcommand{\F}{\mathcal{F}}
\newcommand{\G}{\mathcal{G}}
\newcommand{\pr}{\mathbb{P}}
\newcommand{\abs}[1]{\left\lvert{#1}\right\rvert}
\newcommand{\tonde}[1]{\left({#1}\right)}
\newcommand{\quadre}[1]{\left[{#1}\right]}
\newcommand{\graffe}[1]{\left\lbrace{#1}\right\rbrace}
\newcommand{\norm}[1]{\left\lVert{#1}\right\rVert}
\DeclareMathAlphabet{\mathbbold}{U}{bbold}{m}{n}
\newcommand{\condexp}[2]{\mathbb{E}\left[ {#1} \,\middle\vert \,{#2}\right]}
\newcommand{\expect}[1]{\mathbb{E} \left[ {#1} \right]}
\newcommand{\tildeexpect}[1]{\tilde{\mathbb{E}} \left[ {#1} \right]}
\newcommand{\tildecondexp}[2]{ \tilde{\mathbb{E}}\left[ {#1} \,\middle\vert \,{#2}\right]}
\newcommand{\indicator}[1]{\mathbbold{1}_{ {#1} }}
\newcommand{\randprobspace}{\mathbb{L}^0_{\G}\,(\Omega,\mathcal{P}(E))}
\newcommand{\randproboptstopp}{\mathbb{L}^0_{\G}\,(\,\Omega,\mathcal{P}( [0,T] ))}
\newcommand{\pushright}[1]{\ifmeasuring@#1\else\omit\hfill$\displaystyle#1$\fi\ignorespaces}
\newcommand{\mail}[1]{\href{mailto:#1}{\texttt{#1}}}
\title[mean-field Games of Optimal Stopping with Common Noise]{Existence of Strong Randomized Equilibria in Mean-Field Games of Optimal Stopping with Common Noise}
\author[G.~Ferrari]{Giorgio~Ferrari\textsuperscript{\MakeLowercase{a},1}}
\thanks{\noindent\textsuperscript{a} Bielefeld University, Center for Mathematical Economics (IMW), Bielefeld (Germany).}
\author[A.~Pajola]{Anna~Pajola\textsuperscript{\MakeLowercase{a},2}}
\thanks{\noindent
\noindent \textsuperscript{1} E-mail: \mail{giorgio.ferrari@uni-bielefeld.de}.
\\
\noindent \textsuperscript{2} E-mail: \mail{anna.pajola@uni-bielefeld.de}.
}
\date{\today}
\numberwithin{equation}{section}
\begin{document}


\begin{abstract}
We study a mean-field game of optimal stopping and investigate the existence of strong solutions via a connection with the Bank-El Karoui's representation problem. Under certain continuity assumptions, where the common noise is generated by a countable partition, we show that a strong randomized mean-field equilibrium exists, in which the mean-field interaction term is adapted to the common noise and the stopping time is randomized. Furthermore, under suitable monotonicity assumptions and for a general common noise, we provide a comparative statics analysis of the set of strong mean-field equilibria with strict equilibrium stopping times.
\end{abstract} 

\maketitle

\smallskip 
{\textbf{Keywords}:}
mean-field game of optimal stopping, randomized stopping time, common noise, Bank-El Karoui's representation theorem

\smallskip 
{\textbf{AMS subject classification}}: 
91A16, 
60G40,  
93E20, 
91A55 

 
\section{ Introduction}
\label{sec:intro}

Introduced independently in \cite{huang} and \cite{lasry}, mean-field stochastic games serve as approximation models for $N$-player stochastic games as $N$ tends to infinity. In mean-field games, a large number of exchangeable agents play a symmetric game with mean-field interactions: each player responds to the distribution of other agents’ actions rather than to the actions of individual agents. This distribution is commonly referred to as the mean-field interaction term. We refer to the two-volume monography \cite{carmona2018probabilistic} for theory and applications.

Optimal stopping problems feature prominently in the literature, with applications ranging from the optimal exercise of American options to the optimal exit time of firms from a market. While not an exhaustive review, we refer to \cite{chow1991theory},\cite{dynkin1963optimum}, \cite{karoui1991new}, \cite{peskir2006optimal} and \cite{shiryaev2011optimal}. However, mean-field games of optimal stopping have received less attention compared to mean-field games with regular controls.

This paper investigates a mean-field game of optimal stopping with common noise, in which a representative agent seeks the optimal stopping time to maximize a reward functional. Both the running and terminal reward functions depend on the mean-field interaction term, which, in equilibrium, corresponds to the conditional law of the optimal stopping time given the common noise. The setting we consider is non-Markovian, as the reward functions are general random functions, and the analysis is performed through a purely probabilistic approach. We seek strong mean-field equilibria in the sense of strong solutions to stochastic differential equations: we fix the probability space and the $\sigma$-algebra representing the common noise and look for adapted solutions. As it is well known in the literature, the presence of the common noise complicates the analysis, as it enlarges the space of mean-field interaction terms from deterministic probability measures to random probability measures. This poses challenges in proving the existence of strong equilibria. In mean-field games without common noise, strong solutions are usually obtained using fixed point theorems, such as Schauder's or Kakutani's theorems. Both these results rely on the ability to identify compact subsets within the space of mean-field interactions. In the setting of deterministic probability measures, equipped with the topology induced by the weak convergence, this can be achieved through tightness arguments and Prokhorov's theorem. However, the situation becomes more complex in the presence of common noise, as the space of random probability measures is considerably larger. As noted in \cite{carmona2016mean}, this increased complexity, along with the discontinuity of conditioning with respect to a general common noise, prevents the use of most topological fixed point theorems. To address this, with the aim of proving existence of strong equilibria, we assume that the common noise is generated by a countable partition of the probability space $\Omega$. Under this assumption, the space of random probability measures can be identified with the product of countable many copies of the space of deterministic probability measures and hence it is an Hausdorff locally convex topological vector space, as required by Schauder's fixed point theorem. Furthermore, the assumption significantly simplifies the analysis by enabling the use of tightness arguments to identify the compact subsets necessary to applying Schauder's theorem. Notice that requiring the common noise to be generated by a countable partition of $\Omega$ excludes the case of a Brownian common shock but includes, e.g., the case of the tail $\sigma$-algebra generated by an irreducible, recurrent Markov chain (see also Remark \ref{remark_commonnoise} below).

Our main contribution is an existence result for strong randomized mean-field equilibria in a setting with continuity assumptions of the reward functions with respect to the interaction terms.\ We define a strong randomized mean-field equilibrium as a pair, in which the mean-field interaction term is adapted to the common noise, while the stopping time is randomized, following the framework introduced in \cite{baxter_chacon}. In this sense, we allow additional randomization in the stopping times, while maintaining adapted mean-field interaction terms. This trade-off provides compactness for both the stopping times and the interaction terms, which is essential to prove the existence of solutions, while still preserving their strong nature.

More precisely, we build upon the results of \cite{mf-bek}, which leverage the connection between optimal stopping and the Bank-El Karoui's representation problem to prove the existence of strong $\varepsilon$-mean-field equilibria (cf. Proposition 2.15 in \cite{mf-bek}). Notice that in \cite{mf-bek}
strong equilibria (i.e. with $\varepsilon=0$) cannot be established because the optimal stopping times -- characterized as hitting times of the running supremum $\hat{L}$ of the solution $L$ to a suitably defined Bank-El Karoui's representation problem -- are not continuous functions of $\hat{L}$ with respect to the Lévy distance, since $\hat{L}$ is not strictly increasing. Hence, to recover the continuity needed for the application of Schauder's fixed point theorem, one must instead consider the hitting times of a strictly increasing transformation of $\hat{L}$, namely $\hat{L}+\varepsilon\,\text{id}$. As a consequence, the resulting stopping times are no longer optimal and one obtains strong $\varepsilon$-equilibria rather than strong equilibria.

The aim is then to take suitable limit as $\varepsilon$ tends to zero of the strong $\varepsilon$-equilibria determined in \cite{mf-bek}. To accomplish that, we relax the problem by allowing randomized stopping times as admissible strategies, analogous to the shift from pure to mixed strategies in classical game theory (cf. \cite{aumann}). In particular, as in \cite{baxter_chacon}, we define randomized stopping times as stopping times on the enlarged probability space $\Omega\times [0,1]$.\ These can be interpreted as a mechanism that, for each real number $v\in[0,1]$, yields a corresponding stopping time $\tau_v$. Notably, the space of randomized stopping times,  equipped with the topology induced by the Baxter-Chacon convergence (see \cite{baxter_chacon}), is compact, a property widely used in the literature (see \cite{coquet}, \cite{laraki2005value} and \cite{touzi2002continuous}). We refer to \cite{touzi2002continuous} also for a comprehensive overview of various definitions of randomized stopping times. Thanks to the compactness of both the space of randomized stopping times and the space of interaction terms, we can extract a convergent subsequence from the sequence of strong $\varepsilon$-equilibria established in \cite{mf-bek} as $\varepsilon\to 0$.\ We then show that the limit of this subsequence constitutes a strong randomized mean-field equilibrium, leveraging the continuity of the reward functional with respect to the pair formed by a randomized stopping time and a mean-field interaction term.  In order to ensure this property, we impose the additional assumption that the reward functions depend on the interaction term $m$ only through a deterministic function of an auxiliary continuous stochastic process $X^m$. Remarkably, the running reward function is allowed to depend on the paths of $X^m$, not just its current value, so the setting is still non-Markovian.

In addition, we study the mean-field game of optimal stopping in a setting with an order structure and monotonicity properties of the reward functions with respect to the mean-field interaction terms.\ In this framework, the common noise is represented by a general $\sigma$-algebra, not necessarily generated by a countable partition of $\Omega$, here also allowing for a Brownian common shock.\ We establish the existence of strong mean-field equilibria (with strict optimal stopping times, not randomized) by applying Tarski's fixed point theorem, a result which appears in earlier works (e.g.,  \cite{carmona2017mean}, \cite{dianetti2023unifying}, \cite{mf-bek} and \cite{possamai2023mean}). Our contribution lies in a comparative statics analysis of the set of strong mean-field equilibria.\ 
More precisely, we consider two different sets of terminal and running reward functions and introduce an ordering between them.\ We then exploit the characterization of the largest and smallest optimal stopping times as hitting times of the running supremum process of the solution $L$ to an auxiliary Bank-El Karoui's representation problem, as presented in \cite{mf-bek}. The ordering of the reward functions induces an ordering on the corresponding solutions $L$, which in turn leads to an ordering of the hitting times.\ This allows us to establish an order between the extremal elements of the sets of strong mean-field equilibria.

As already mentioned, the literature on mean-field games of optimal stopping is still relatively limited.\ In \cite{carmona2017mean}, the authors studied a game of optimal stopping with Brownian common noise, inspired by a model of bank runs. They proved the existence of strong mean-field equilibria in a setting with strategic complementarity, and weak existence under continuity assumptions.\ Among existing works, this one is closest to ours in terms of the definition of equilibrium.\ Additionally, \cite{nutz2018mean} considers a game where the interaction depends on the number of players who have already stopped.\ \cite{bertucci2018optimal} adopts a purely analytical approach -- through the study of coupled Hamilton-Jacobi-Bellman and Fokker-Planck equations associated to an equilibrium -- and proves the existence of mixed solutions.\ By contrast, \cite{bouveret2020mean} and \cite{dumitrescu2021control} take a probabilistic approach, using linear programming to prove the existence of relaxed solutions in the sense of occupational measures, replacing stopping times.\ These techniques have been applied to a specific optimal stopping game with common noise in the context of electricity markets in \cite{aid2020entry}. Finally, \cite{possamai2023mean} studies such games through the master equation framework, while \cite{yu2025major} solves a discrete-time major-minor mean-field game combining linear programming techniques and entropy regularization.

The rest of the paper is organized as follows. In Section \ref{section_problem} we provide the formulation of the mean-field game of optimal stopping and the definition of strong mean-field equilibria. Then two distinct settings are considered. We investigate the problem under continuity conditions in Section \ref{section_optstopp_cont} and under monotonicity conditions in Section \ref{section_optstopp_mon}. In Appendix \ref{appendix_b} we provide a brief summary of the results in \cite{mf-bek} needed in our analysis, while in Appendix \ref{appendix_a} we provide the proofs of some auxiliary results stated in the previous sections.

\smallskip \section{  Formulation of the problem }
\label{section_problem}

\subsection{Preliminaries and notation}
Let $T\in[0,+\infty]$ be the time horizon and $(\Omega, \F, \mathbb{F}, \pr)$ a filtered probability space with filtration $\mathbb{F}:=(\F_t)_{t\in[0,T]}$ satisfying the usual conditions of right-continuity and completeness. We assume that the space $\Omega$ is rich enough to support an additional $\sigma$-algebra $\mathcal{G}$, which will represent the common noise. 
    We denote by $\mathcal{S}$ the collection of all $\mathbb{F}$-stopping times with values in $[0,T]$ a.s.

    \medskip Let $E$ be a non-empty Polish space and $\mathcal{P}(E)$ the space of all Borel probability measures on $E$, equipped with the topology induced by the weak convergence (cf. Section 4.1 in \cite{kall}). This topology is generated by the evaluation maps $\pi_f$ such that for $m\in\mathcal{P}(E)$ it gives $m f:=\int_E f(e)\,m(de)$ for $f\in C^b_+(E)$. Here, $C^b_+(E)$ denotes the class of all bounded, continuous functions $f:\, E\to \R_+$. The space $\mathcal{P}(E)$ is metrizable and a Polish space (cf. Lemma 4.5 in \cite{kall}). We consider now the space $\mathbb{L}^0_{\G}\,(\Omega,\mathcal{P}(E))$ of all $\G$-random probability measures over $E$, i.e. all the $\G$-measurable random variables $m:\, \Omega\to \mathcal{P}(E)$. We equip this space with the topology induced by the convergence in probability. It is well-known that $m_n \xrightarrow{wP} m$ (that is, $m_n$ converges weakly in probability to $m$) if and only if any sub-sequence $N'\subset \N$ admits a further sub-sequence $N''\subset N'$ converging a.s. A further characterization of such convergence is based on the following known result.
    \begin{lemma} \label{lemma_wPconv}
        (cf. Lemma 4.8 in \cite{kall}) For any $(m_n)_{n\in\N}$ and $m$ in $\randprobspace$, $m_n  \xrightarrow{wP}  m$ if and only if
        \begin{equation*}
            m_n f=\int_E f(e)\,m_n(\cdot, de)\,  \xrightarrow{P}  m f=\int_E f(e)\,m(\cdot, de) \quad \forall\,f\in \hat{C}^b_+(E),
        \end{equation*}
        where $\hat{C}^b_+(E)$ denotes the class of all bounded, continuous functions $f:\, E\to \R_+$ with bounded support. 
    \end{lemma}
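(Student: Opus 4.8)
The plan is to derive both implications from two elementary facts: the sub-subsequence (Riesz) criterion, according to which a sequence of real random variables converges in probability to $X$ iff every subsequence has a further subsequence converging to $X$ almost surely, and the analogous characterization of $\xrightarrow{wP}$ recalled just above the statement. Throughout, note that for $f\in C^b_+(E)$ the map $\mu\mapsto \mu f$ is continuous, hence Borel, on $\mathcal{P}(E)$, so that $m_n f$ and $m f$ are genuine real random variables and the statement ``$m_n f\xrightarrow{P} m f$'' is meaningful.

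\emph{Necessity.} Assume $m_n\xrightarrow{wP} m$ and fix $f\in\hat{C}^b_+(E)$ --- in fact any $f\in C^b_+(E)$ will do here. Given an arbitrary $N'\subset\N$, the $wP$-convergence produces $N''\subset N'$ along which $m_n\to m$ almost surely in $\mathcal{P}(E)$; on the almost sure event on which $m_n(\omega)\to m(\omega)$ weakly, the very definition of weak convergence in $\mathcal{P}(E)$ yields $m_n(\omega)f\to m(\omega)f$, i.e.\ $m_n f\to m f$ a.s.\ along $N''$. Since $N'$ was arbitrary, the sub-subsequence criterion gives $m_n f\xrightarrow{P} m f$.

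\emph{Sufficiency.} The input here is the existence of a \emph{countable} convergence-determining family $(f_k)_{k\in\N}\subset\hat{C}^b_+(E)$, meaning that for $\mu,\mu_n\in\mathcal{P}(E)$ one has $\mu_n\to\mu$ weakly iff $\mu_n f_k\to\mu f_k$ for every $k$. Granting this and assuming $m_n f\xrightarrow{P} m f$ for all $f\in\hat{C}^b_+(E)$, fix $N'\subset\N$ and build nested subsequences $N'\supset N_1\supset N_2\supset\cdots$ with $m_n f_k\to m f_k$ a.s.\ along $N_k$; the diagonal subsequence $N''$ then satisfies $m_n f_k\to m f_k$ a.s.\ along $N''$ for every $k$. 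On the almost sure event where all these countably many convergences hold simultaneously, the convergence-determining property forces $m_n(\omega)\to m(\omega)$ weakly in $\mathcal{P}(E)$; hence $m_n\to m$ a.s.\ along $N''$, and since $N'$ was arbitrary, $m_n\xrightarrow{wP} m$.

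What remains, and what I expect to be the main obstacle, is the construction of the countable family $(f_k)\subset\hat{C}^b_+(E)$. That a countable convergence-determining family exists \emph{in} $C^b_+(E)$ is classical, since $\mathcal{P}(E)$ with the weak topology is second countable (being Polish, by Lemma 4.5 in \cite{kall}) and its topology is generated by the evaluations $\pi_f$, $f\in C^b_+(E)$. The point is to arrange that the test functions have bounded support, which in a non-locally-compact Polish space is delicate: one fixes $x_0\in E$ and a compatible metric, lets $B_p:=\{e\in E:\, d(e,x_0)<p\}$ be the resulting bounded exhaustion, and exploits tightness of probability measures (so that $\mu(B_p)\uparrow 1$ for each $\mu$) to cut off the members of the original family by continuous functions equal to $1$ on $B_p$ and vanishing off $B_{p+1}$, adjoining these cut-offs as well; one then verifies that the enlarged countable subfamily of $\hat{C}^b_+(E)$ still detects escaping mass and still separates measures, hence is still convergence-determining. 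All remaining steps are routine.
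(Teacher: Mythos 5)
The paper does not actually prove this lemma: it is imported wholesale as Lemma 4.8 of \cite{kall}, so there is no internal argument to compare against. Your reconstruction is correct and follows the standard route. The necessity direction (sub-subsequence criterion plus continuity of $\mu\mapsto\mu f$ on $\mathcal{P}(E)$) and the sufficiency direction (diagonal extraction over a countable convergence-determining family) are both sound; the only substantive input you leave as a sketch --- the existence of a countable convergence-determining family inside $\hat{C}^b_+(E)$ and the passage from vague to weak convergence when the limit is a probability measure --- is exactly the content of the auxiliary lemmas in \cite{kall} (and is what the paper's own Remark after the lemma alludes to via ``equivalence of weak and vague convergence over $\mathcal{P}(E)$''). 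Two small points of care in that sketch: first, ``separates measures'' alone does not imply ``convergence-determining''; your argument goes through because the adjoined cut-offs $\chi_p$ give $\liminf_n\mu_n(B_{p+1})\geq\mu\chi_p\uparrow 1$, hence tightness of $(\mu_n)$, and then Prokhorov plus separation identifies every weak limit point with $\mu$. Second, the countable family dense enough to separate/determine convergence should be built from Lipschitz functions associated with a countable dense subset of $E$ (multiplied by the cut-offs), rather than from sup-norm density in $C^b$ of a bounded set, since bounded sets of a Polish space need not be compact. With those standard adjustments the proof is complete.
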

    \begin{remark}
        The condition from Lemma \ref{lemma_wPconv} above is stated taking in consideration  the function class $\hat{C}^b_+(E)$. This is due to the equivalence of the weak and vague convergence over $\mathcal{P}(E)$, where the vague convergence is generated by all evaluation maps $\pi_f$ for $f\in\hat{C}^b_+(E)$.
    \end{remark}
     
    \subsection{The mean-field game of optimal stopping} \label{subsection_MFGoptstopp} Take now $E:= [0,T]$, with the distance $d_{\rho}$ defined as $d_{\rho}(s,t):=\abs{e^{-\rho t}- e^{-\rho s}}$ for a fixed discount factor $\rho>0$, where $e^{-\rho (+\infty)}:=0$. Then, $[0,T]$ is a compact Polish space.

    Notice that, for $T<+\infty$, the distance $d_{\rho}$ is strongly equivalent to the Euclidean distance. For $T=+\infty$, any sequence diverging to $+    \infty$ in the Euclidean distance converges to $+\infty$ in $d_{\rho}$. 

    \medskip Let $h:\Omega\times [0,T]\times \randproboptstopp\to \R$ and $g:\Omega\times [0,T]\times \randproboptstopp\to \R$ be the (random) running and terminal reward functions, respectively.
    For any fixed $m\in\randproboptstopp$, the representative agent seeks to solve the optimal stopping problem
    \begin{equation}
        \sup_{\tau \in \mathcal{S} }\, \expect{ \int_0^{\tau} e^{-\rho\,t}\,h(t,m)\,dt +e^{-\rho \,\tau}g(\tau,m)  }=: \sup_{\tau \in \mathcal{S} }\, J(\tau,m),\label{eq_def_functional}
    \end{equation}
    where we use the convention $e^{-\rho \tau} g(\tau,m)=\limsup_{t\to+\infty} e^{-\rho t} g(t,m)$ on $\graffe{\tau=+\infty}.$ 
    
    At equilibrium, the given $m\in\randproboptstopp$ needs to coincide with the conditional law (given $\G$) of the optimal stopping time  $\tau^{m,\,*}$ that maximizes $J(\tau,m)$. This two-step procedure, which is common in the mean-field games literature, is formalized in the following definition of a strong mean-field equilibrium.

    \begin{definition} \label{def_mfequilibrium}
     For any $\varepsilon \geq 0$, a couple $(m^*,\tau^*)\in \randproboptstopp\times \mathcal{S}$ is a \textbf{strong $\varepsilon$-mean-field equilibrium} if 
        \begin{equation*}
            J(\tau^*,m^*)\geq \sup_{\tau\in\mathcal{S}} J(\tau,m^*) -\varepsilon \quad \text{and} \quad m^*=\mathcal{L}( \,\tau^* \,|\,\G).
        \end{equation*}
        When $\varepsilon=0$, we say that it is a \textbf{strong mean-field equilibrium}.
    \end{definition}

    \noindent Notice that this notion of equilibria resembles the one of strong equilibria in \cite{carmona2017mean}.

    \begin{remark}
        In the mean-field games' literature, the common noise is usually represented by a filtration $\mathbb{G}:=(\G_t)_{t\in[0,T]}$ and not by a single $\sigma$-algebra. However, whenever the immersion property or H-property holds (cf. Section 1.1.1 in \cite{carmona2018probabilistic} Volume II), as in the case of a Brownian motion, the conditional distribution knowing $\G_t$ at time $t\geq 0$ coincides with the conditional distribution knowing the tail $\sigma$-algebra $\G_T$. 
    \end{remark}

    Throughout the paper we will make the following assumptions on the reward processes $h(\cdot,m)$ and $g(\cdot,m)$.
    \begin{assumption} \label{hp_optstopp}
        For any fixed $m\in \randproboptstopp$,
        \begin{enumerate}
            \item[(i)] the process $(\omega,t)\to e^{-\rho t}\,g(\omega,t,m)$ is optional, of class (D) and upper semicontinuous in expectation\footnote{We recall that an optional process $Y$ is \textit{of class (D)} if  $\graffe{Y_{\tau}: \; \tau\in\mathcal{S}}$ is $\pr$-uniformly integrable. Moreover, $Y$ is \textit{upper-semicontinuous in expectation} if for any monotone sequence $(\tau_n)_{n\geq1}$ of stopping times converging a.s. to some stopping time $\tau$ one has
    \begin{equation*}
        \E \quadre{Y_{\tau}} \geq \limsup
        _{n\to +\infty} \E \quadre{Y_{\tau_n}}. 
    \end{equation*}}; 
            \item[(ii)] the process $(\omega,t)\to h(\omega,t,m)$ is progressively measurable with  
        \begin{equation*}
            \expect{\int_0^{T} e^{-\rho\,t}\abs{h(t,m)}\,dt}<+\infty.
        \end{equation*}
        \end{enumerate}
    \end{assumption}

   \smallskip
   \section{  Existence of strong randomized mean-field equilibria under continuity conditions} \label{section_optstopp_cont}

  In this Section, we build upon the result of existence of strong $\varepsilon$-mean-field equilibria established in \cite{mf-bek} in order to prove existence of a strong randomized equilibrium (cf. Definition \ref{def_relaxedNE}). In a strong randomized mean-field equilibrium, the mean-field interaction term remains adapted to the common noise $\G$, while the stopping time is randomized, as in the framework introduced in \cite{baxter_chacon}. 

   \smallskip \subsection{Existence of strong $\varepsilon$-mean-field equilibria} \label{subsection_optstopp_cont}   
   We begin by briefly presenting the result in \cite{mf-bek}, whose proof is outlined in Appendix \ref{appendix_a} for completeness. Here we slightly generalize the argument in \cite{mf-bek} so to allow also for the case $T=\infty$.
     
     \smallskip In order to solve the mean-field game of optimal stopping presented in Section \ref{subsection_MFGoptstopp}, we consider its connection to a mean-field version of Bank-El Karoui's representation problem. More precisely, for any $m\in\randproboptstopp$ we define an optional process $Y^m$ and a function $f^m:\,\Omega\times [0,T]\times \R\to\R$ as
    \begin{align*}
        Y^{m}_t&:=e^{-\rho t}g(t,m),\\
        f^m(\omega,t,l)&:=e^{-\rho t}\, (\,h(\omega,t,m)+l\,),
    \end{align*}
    for all $\omega\in\Omega,\,t\in[0,T]$. Then, under Assumption \ref{hp_optstopp}, for any fixed $m\in\randproboptstopp$ Bank-El Karoui's theorem (cf. Theorem 3 in \cite{bank-elkaroui}) yields the existence of an optional process $L^m$ such that
    \begin{equation*}
        Y^m_{\tau}= \condexp{\int_{\tau}^T\,f^m(t,\sup_{v\in[\tau,t)}L_v^m\,)\,dt}{\F_{\tau}} \quad\text{a.s.}\,\,\forall\,\tau\in\mathcal{S}.
    \end{equation*}
    Moreover, the stopping time 
    \begin{equation}
        \tau^m:=\,\inf\{t\in[0,T]:\,\sup_{v\in[0,t)}\,L_v>0\}\wedge T \label{eq_def_optimalstoppingtime}
    \end{equation}
    is the largest stopping time such that
    \begin{equation*}
       \tau^m\in\argmax_{\tau\in\mathcal{S}}\expect{ \int_0^{\tau} f^m(t,0)\,dt +Y^m_{\tau}  },
   \end{equation*}
   i.e. $\tau^m\in\argmax_{\tau\in\mathcal{S}}J(\tau,m)$ and thus solves the optimal stopping problem (\ref{eq_def_functional}) for fixed $m\in\randproboptstopp$; cf. Proposition \ref{prop_opttimes_hat_l} in the Appendix. Notice that we could have alternatively considered the hitting time
   \begin{equation*}
       \sigma^m:=\,\inf\{t\in[0,T]:\,\sup_{v\in[0,t)}\,L_v \geq 0\}\wedge T,
   \end{equation*}
   which represents the smallest stopping time that maximizes $J(\cdot,m)$ (cf. Proposition \ref{prop_opttimes_hat_l}). Hovewer, since this section  is concerned solely with an existence result, either choice realizes the purpose. A couple $(\tau^m,m)$ is now a strong mean-field equilibrium (cf. Definition \ref{def_mfequilibrium}) if $m$ is a fixed point for the map $m\to \mathcal{L}(\tau^m\,|\G)$. In order to prove the existence of fixed points, we use Schauder's theorem, which requires the compactness of the set of the mean-field interaction terms and the continuity of the map.

   \smallskip Compactness will be attained through the Prokhorov's theorem and the additional assumption that the common noise $\G$ is countably generated by a partition of $\Omega$. Indeed, in this case, there exists an identification between the space of $\G$-random probability measures and the product space $(\mathcal{P}(E))^\N$, equipped with the product weak topology. Therefore, the existence of a compact and convex subset of $\randproboptstopp$ containing all mean-field interaction terms can be established through a tightness argument.

    \begin{remark} \label{remark_commonnoise}
        The assumption that $\G$ is generated by a countable partition excludes the case of a common noise generated by a Brownian motion. However, the assumption holds  when $\G$ is generated by a random variable with countable values, or is the tail $\sigma$-algebra of an irreducible, recurrent Markov Chain where all states have finite period (cf. Theorem 5.7.3. in \cite{durrett2019probability}).
    \end{remark}

   On the other hand, the map $m\to\mathcal{L}(\tau^m\,|\,\G)$ is not continuous with respect to the weak convergence in probability. Recall that for any $m\in\randproboptstopp$, $\tau^m$ is defined as in (\ref{eq_def_optimalstoppingtime}), i.e. as an hitting time of the running supremum process $\sup_{v\in[0,t)}L^m_v=:\hat{L}^m_t$. The paths of the process $\hat{L}^m$ belong to the space $\mathbb{V}^+$ of left-continuous, non-decreasing functions $v:\,[0,T)\to\R\cup\{-\infty\}$, which we equip with the Lévy distance $d_L$. Now, Theorem 3.1 in \cite{mf-bek} establishes the continuity of the map $m\to \hat{L}^m$ with respect to the convergence in probability under $d_L$. However, the following Lemma and Remark show that  hitting times of non-decreasing processes are not continuous with respect to $d_L$, unless the limit process has strictly increasing paths.

    \begin{lemma} \label{lemma_contopttimes}
        For any functions $v_n,v$ in $\mathbb{V}^+$ and fixed $l\in\R$, let
        \begin{align*}
            \tau^n&:=\inf \graffe{t\in[0,T): \,v_n(t)>l}\wedge T \quad\forall\,n\in\N,\\
            \tau&:= \inf \graffe{t\in[0,T):\, v(t)>l}\wedge T.
        \end{align*}
        If $v$ is strictly increasing and $d_L(v_n,v)\to0$, then $ \tau^n\to\tau.$
    \end{lemma}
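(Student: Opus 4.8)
The plan is to show convergence of the hitting times by establishing both $\limsup_n \tau^n \le \tau$ and $\liminf_n \tau^n \ge \tau$, using the Lévy distance convergence $d_L(v_n,v)\to 0$ together with the strict monotonicity of the limit $v$. Recall that $d_L(v_n,v)\to 0$ is equivalent to pointwise convergence $v_n(t)\to v(t)$ at every continuity point $t$ of $v$; since $v$ is strictly increasing and real-valued on the relevant range, its set of discontinuities is at most countable, and on its complement we have this pointwise convergence. The first thing I would do is dispose of trivial cases: if $v(t)>l$ for no $t\in[0,T)$, then $\tau=T$ and, since the $\tau^n$ are all bounded by $T$, the claim $\tau^n\to T$ follows from the lower bound argument below; and if $v(t)>l$ already for $t$ arbitrarily close to $0$, one argues symmetrically. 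So assume $0<\tau<T$.

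For the upper bound $\limsup_n \tau^n \le \tau$: fix $\varepsilon>0$ small enough that $\tau+\varepsilon<T$ and pick a continuity point $t^*$ of $v$ with $\tau<t^*<\tau+\varepsilon$. By the definition of $\tau$ and the strict monotonicity of $v$, we have $v(t^*)>l$; indeed $v(\tau+\delta)>l$ for all small $\delta>0$ by definition of the infimum, and strict monotonicity propagates this. Since $t^*$ is a continuity point, $v_n(t^*)\to v(t^*)>l$, so $v_n(t^*)>l$ for all large $n$, which forces $\tau^n\le t^*<\tau+\varepsilon$. Letting $n\to\infty$ and then $\varepsilon\downarrow 0$ gives $\limsup_n\tau^n\le\tau$.

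For the lower bound $\liminf_n\tau^n\ge\tau$: fix $\varepsilon>0$ with $\tau-\varepsilon>0$ and pick a continuity point $t^*$ of $v$ with $\tau-\varepsilon<t^*<\tau$. By definition of $\tau$ we have $v(s)\le l$ for all $s<\tau$, and here is where strict monotonicity is essential: $v$ strictly increasing means $v(t^*)<v(\tau)\le \sup_{s<\tau}v(s)$, but more to the point we actually get $v(t^*)< l$ when $v$ is strictly increasing, because $v(t^*)<v(s)\le l$ for any $s$ with $t^*<s<\tau$. Thus $v(t^*)<l$ strictly. Since $t^*$ is a continuity point, $v_n(t^*)\to v(t^*)<l$, hence $v_n(t^*)<l$ (even $v_n(s)\le v_n(t^*)<l$ for $s\le t^*$ by monotonicity of $v_n$) for all large $n$, which gives $\tau^n\ge t^*>\tau-\varepsilon$. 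Letting $n\to\infty$ and $\varepsilon\downarrow 0$ yields $\liminf_n\tau^n\ge\tau$, and combining the two bounds completes the proof.

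The main obstacle — and the reason the hypothesis of strict monotonicity cannot be dropped — is the lower bound step: without strict monotonicity one could have $v$ constant equal to $l$ on an interval to the left of $\tau$ while the $v_n$ hover just above $l$ there, so that $\tau^n$ jumps below $\tau$; strict monotonicity is exactly what guarantees $v(t^*)<l$ strictly at continuity points $t^*<\tau$, creating the gap needed to push $v_n(t^*)$ below $l$. A secondary technical point to handle carefully is the passage from $d_L$-convergence to pointwise convergence at continuity points when $T=\infty$ and the functions take values in $\R\cup\{-\infty\}$, and the matching of continuity points of $v$ with the thresholds $\tau\pm\varepsilon$; this is routine once one notes that the continuity points of $v$ are dense.
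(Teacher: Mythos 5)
Your proof is correct and follows essentially the same route as the paper's: pointwise convergence at the (dense) continuity points of $v$, a point just above $\tau$ where $v>l$ forces $\tau^n$ down, and a point just below $\tau$ where strict monotonicity gives $v<l$ strictly and forces $\tau^n$ up. The only cosmetic difference is that the paper handles the boundary case $\tau=T$ (including $T=\infty$) by a separate contradiction argument with a subsequence, whereas you absorb it into the lower-bound step, which works after the minor adaptation you already flag.
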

  
    \begin{remark} \label{remark_hittingtimes}
        The assumption of strict monotonicity of $v$ is necessary. Assume $l=0$ and choose $t_1<T$. For any $n$, let $v_n(t):=\frac{1}{n}$ for $t\in(0,t_1+\frac{1}{n})$ and $v_n(t):=t-t_1$ for $t\in[ t_1+\frac{1}{n},T)$. Also, let $v(t):=0$ for $t\in(0,t_1)$ and $v(t):=t-t_1$ for $t\in[t_1,T)$. These functions are continuous and $v_n(t)\to v(t)$ for any $t$, so $d_L(v_n,v)\to 0$ due to Proposition C1 in \cite{mf-bek}. However, $\tau^n=0$ for any $n$ and $\tau=t_1$.
    \end{remark}
    In light of Lemma \ref{lemma_contopttimes} and Remark \ref{remark_hittingtimes}, Proposition 2.15 in \cite{mf-bek} determines existence of strong $\varepsilon$-mean-field equilibria by replacing $\tau^m$ (the hitting time of $(\hat{L}^m_t)_{t\in[0,T}$) with $\tau^{m,\,\varepsilon}$ (the hitting time of the strictly increasing process $(\hat{L}^m_t+\delta_{\varepsilon}\, t)_{t\in[0,T]}$), to gain continuity of the map $m\to \mathcal{L}(  \tau^{m,\,\varepsilon}\,|\,\G)$. 

    \smallskip \begin{theorem} \label{theo_optstopp}
       (cf. Proposition 2.15 in \cite{mf-bek}) Under Assumption \ref{hp_optstopp}, assume that the process $(e^{-\rho t}g(t,m)\,)_{t\in[0,T]}$ has a.s. left upper-semicontinuous paths for all $m\in \randproboptstopp$. Moreover, assume that
       \begin{enumerate}
           \item $\G$ is generated by a countable partition of $\Omega$, i.e. there exists a countable collection of disjoint measurable sets $(A_i)_{i\in\N}$ with  $\Omega=\cup_i A_i$ and $\pr(A_i)>0$ for all $i$, such that $\G=\sigma(A_i:\,i\in\N)$;
           \item for any sequence $(m^n)_n$ and $m^{\infty}$ in $\randproboptstopp$ such that $m^n\stackrel{wP}{\to} m^{\infty}$, we have
           \begin{align}
                \lim_{n\to+\infty} \mathbb{E} \quadre{  \sup_{t\in[0,T]} e^{-\rho\,t}\abs{g(t,{m^n})-g(t,{m^{\infty}})}+\int_0^{T} e^{-\rho\,t}\abs{ h(t,{m^n})-h(t,{m^{\infty}})}\,dt  }=0.
           \label{eq_limite}
           \end{align}
       \end{enumerate}
        Then, for any $\varepsilon>0$ there exists a strong $\varepsilon$-mean-field equilibrium.
    \end{theorem}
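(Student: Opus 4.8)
\emph{Proof proposal.} The plan is to fix $\varepsilon>0$, recast the equilibrium condition as a fixed point of a regularized best-response map, and apply Schauder's fixed point theorem. For each $m\in\randproboptstopp$, Bank--El Karoui's theorem (Theorem 3 in \cite{bank-elkaroui}) applied to the pair $(Y^m,f^m)$ yields the optional process $L^m$, and Proposition \ref{prop_opttimes_hat_l} identifies the maximizers of $J(\cdot,m)$ with the stopping times lying between the hitting times of level $0$ of the non-decreasing, left-continuous process $\hat L^m_t:=\sup_{v\in[0,t)}L^m_v$. Since, by Lemma \ref{lemma_contopttimes} and Remark \ref{remark_hittingtimes}, such hitting times are not continuous functions of $\hat L^m$ unless the limit path is strictly increasing, I would replace the optimal time by
\[
\tau^{m,\varepsilon}:=\inf\{t\in[0,T]:\ \hat L^m_t+\delta_\varepsilon\,\phi(t)>0\}\wedge T,
\]
where $\phi$ is a fixed bounded, continuous, strictly increasing, nonnegative function on $[0,T]$ (one may take $\phi=\mathrm{id}$ when $T<\infty$, as in \cite{mf-bek}, and e.g.\ $\phi(t)=1-e^{-\rho t}$ when $T=+\infty$, so that $\phi$ extends continuously to the $d_\rho$-compactification), and $\delta_\varepsilon>0$ is a parameter to be fixed later. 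As $\tau^{m,\varepsilon}$ is the hitting time of a level by an optional, left-continuous process, it is an $\mathbb F$-stopping time, so the map $\Phi^\varepsilon(m):=\mathcal{L}(\tau^{m,\varepsilon}\mid\mathcal G)$ sends $\randproboptstopp$ into itself.

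For the domain and compactness: $[0,T]$ with the distance $d_\rho$ is a compact Polish space (also for $T=+\infty$), so by Prokhorov's theorem $\mathcal P([0,T])$ is compact, convex and metrizable. Using hypothesis (1), write $\Omega=\bigcup_i A_i$ with $\pr(A_i)>0$ and $\mathcal G=\sigma(A_i:i\in\N)$; any $\mathcal G$-random measure is a.s.\ constant on each $A_i$, which identifies $\randproboptstopp$ with $\prod_{i\in\N}\mathcal P([0,T])$ (product topology), and under this identification $m_n\xrightarrow{wP}m$ is equivalent to coordinatewise weak convergence (this is where $\pr(A_i)>0$ is used). Hence $\randproboptstopp$ is itself a compact, convex, metrizable subset of the Hausdorff locally convex topological vector space $\prod_{i\in\N}\mathcal M([0,T])$ of sequences of finite signed measures, so it serves directly as the domain for Schauder's theorem without any further search for an invariant subset.

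The central point is the continuity of $\Phi^\varepsilon$, which I would obtain by chaining three facts. First, under Assumption \ref{hp_optstopp}, the left upper-semicontinuity of the paths of $(e^{-\rho t}g(t,m))_t$, and hypothesis (2) (namely \eqref{eq_limite}), Theorem 3.1 in \cite{mf-bek} gives that $m\mapsto\hat L^m$ is continuous from $\randproboptstopp$ to $\mathbb V^+$, convergence in the latter being in probability under the Lévy distance $d_L$; adding the fixed continuous path $\delta_\varepsilon\phi$ preserves this convergence, and the limit path $\hat L^{m^\infty}+\delta_\varepsilon\phi$ is strictly increasing because $\phi$ is (this is exactly what makes Lemma \ref{lemma_contopttimes} applicable). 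Second, given $m^n\xrightarrow{wP}m^\infty$, every subsequence admits a further subsequence along which $\hat L^{m^n}+\delta_\varepsilon\phi\to\hat L^{m^\infty}+\delta_\varepsilon\phi$ a.s.\ in $d_L$, so Lemma \ref{lemma_contopttimes} applies pathwise and yields $\tau^{m^n,\varepsilon}\to\tau^{m^\infty,\varepsilon}$ a.s.; since this holds along a further subsequence of every subsequence, $\tau^{m^n,\varepsilon}\to\tau^{m^\infty,\varepsilon}$ in probability for $d_\rho$. Third, conditioning on $\mathcal G$ is continuous here (the second use of hypothesis (1)): for $f\in\hat{C}^b_+([0,T])$,
\[
\mathcal{L}(\tau^{m,\varepsilon}\mid\mathcal G)f=\sum_{i\in\N}\frac{1}{\pr(A_i)}\,\expect{f(\tau^{m,\varepsilon})\,\indicator{A_i}}\,\indicator{A_i},
\]
and $f(\tau^{m^n,\varepsilon})\to f(\tau^{m^\infty,\varepsilon})$ in probability, together with boundedness and dominated convergence, forces each coefficient to converge, i.e.\ $\Phi^\varepsilon(m^n)\xrightarrow{wP}\Phi^\varepsilon(m^\infty)$. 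Thus $\Phi^\varepsilon$ is a continuous self-map of a compact convex set, and Schauder's theorem provides $m^\varepsilon$ with $m^\varepsilon=\mathcal{L}(\tau^{m^\varepsilon,\varepsilon}\mid\mathcal G)$, which is the second requirement in Definition \ref{def_mfequilibrium}.

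It remains to check the $\varepsilon$-optimality of $(m^\varepsilon,\tau^{m^\varepsilon,\varepsilon})$, and this is the step I expect to be the main obstacle. Since $\tau^{m^\varepsilon}$ maximizes $J(\cdot,m^\varepsilon)$, one must bound $\sup_{\tau}J(\tau,m^\varepsilon)-J(\tau^{m^\varepsilon,\varepsilon},m^\varepsilon)$ by $\varepsilon$. Because $f^m$ is affine in its last variable, the Bank--El Karoui representation gives $J(\tau,m)=\expect{\int_0^T e^{-\rho t}h(t,m)\,dt}+\expect{\int_\tau^T e^{-\rho t}\sup_{v\in[\tau,t)}L^m_v\,dt}$, and by construction $\tau^{m,\varepsilon}\le\tau^m$ with $-\delta_\varepsilon\phi(t)<\hat L^m_t\le0$ for $\tau^{m,\varepsilon}<t<\tau^m$; combining this with the class (D) and integrability bounds of Assumption \ref{hp_optstopp} controls the gap in terms of $\delta_\varepsilon$. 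The delicate issue is that $\delta_\varepsilon$ must be chosen \emph{before} the fixed point $m^\varepsilon$ is known, so the bound has to be uniform over the compact domain; I would extract this uniformity from hypothesis (2), which via the triangle inequality makes the functional $m\mapsto\expect{\sup_{t\in[0,T]}e^{-\rho t}\abs{g(t,m)}+\int_0^T e^{-\rho t}\abs{h(t,m)}\,dt}$ continuous, hence bounded on the compact domain, so that one may then fix $\delta_\varepsilon$ small enough --- depending only on $\varepsilon$ and this uniform bound --- to conclude.
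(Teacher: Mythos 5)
Your proposal follows essentially the same route as the paper: regularize the hitting time by adding a strictly increasing perturbation $\delta_\varepsilon\phi$, apply Schauder's theorem on $\randproboptstopp\cong\prod_{i}\mathcal{P}([0,T])$ using Prokhorov's theorem, the stability of $m\mapsto\hat L^m$ and Lemma \ref{lemma_contopttimes}, and then estimate the optimality gap of the perturbed hitting time. The uniformity issue you flag at the end dissolves more simply than you anticipate: in the paper's estimate each of the three error terms arising from the Bank--El Karoui representation of the gap is pointwise bounded by $\delta_\varepsilon t$, so the gap is at most $3\delta_\varepsilon\int_0^T te^{-\rho t}\,dt$ independently of $m$, and one just sets $\delta_\varepsilon=\varepsilon/(3\int_0^T te^{-\rho t}\,dt)$ from the outset (finite even for $T=+\infty$, which is also why the paper can keep $\phi=\mathrm{id}$ in that case rather than switching to a bounded $\phi$).
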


   \smallskip\subsection{Existence of strong randomized mean-field equilibria} Theorem \ref{theo_optstopp} establishes the existence of strong $\varepsilon$-mean-field equilibria $(m^{*,\,\varepsilon},\tau^{*,\,\varepsilon})$ for any $\varepsilon>0$. However, its arguments cannot be used to prove the existence of strong mean-field equilibria. This is due to the fact that the optimal stopping times are characterized as hitting times of a process that does not necessarily have strictly increasing paths and, as noted in Remark \ref{remark_hittingtimes}, such hitting times are generally not continuous with respect to the process under the Lévy distance. This prevents the application of Schauder's fixed point theorem. A natural way now to find a strong mean-field equilibrium would be to study the convergence of  $(m^{*,\,\varepsilon},\tau^{*,\,\varepsilon})$ for $\varepsilon\to 0$. However, although the mean-field interactions $m^{*,\,\varepsilon}$ admit a converging subsequence due a compactness argument, we cannot conclude the same for the optimal stopping times. Indeed, in general, there is no guarantee that the limit of a sequence of stopping times is again a stopping time. Therefore, we weaken the definition of strong mean-field equilibria by allowing the stopping times to be randomized, in order to recover a compactness property. We refer to \cite{baxter_chacon} for the first definition and properties of randomized stopping times.

    \bigskip We consider an extended probability space.  Let $\tilde{\Omega} :=\Omega\times[0,1]$ and $(\tilde{\F_t})_t$ the filtration such that, for any $t\in[0,T]$, $\tilde{\F}_t:=\F_t\times \mathcal{B}_{[0,1]}$, where $\mathcal{B}_{[0,1]}$ is the Borel $\sigma$-algebra over $[0,1]$. Let $\tilde{\mathbb{P}}=\mathbb{P}\otimes dv$, where $dv$ is the Lebesgue measure over $[0,1]$, and $\tilde{\mathbb{E}}$ the expectation w.r.t. $\tilde{\pr}$. 
    \begin{definition}
        A random variable $\tilde{\tau}:\tilde{\Omega}\to [0,T]$ is a \textit{randomized stopping time} if $\graffe{\tilde{\tau}\leq t}\in\Tilde{\F}_t$ for any $t\in[0,T]$ and such that, for any $\omega\in\Omega$, $v\to\tilde{\tau}(\omega,v)$ is non-decreasing and left-continuous.
    \end{definition}
    Let $\tilde{\mathcal{S}}$ be the collection of all randomized stopping times. Clearly, every $\mathbb{F}$-stopping time $\tau$ belongs to $\tilde{\mathcal{S}}$, through the natural inclusion $i:\,\mathcal{S}\to \tilde{\mathcal{S}}$ such that $i(\tau)\, (\omega,v):=\tau(\omega)$ for any $(\omega,v)\in\tilde{\Omega}$. 

    \begin{remark}
        In the literature, randomized stopping times have been equivalently called mixed stopping times and are related to the concept of mixed strategies, as introduced by \cite{aumann}. One can think of randomized stopping time as a two-step randomization: first, the agent generates a realization $v$ of a uniformly distributed random variable over $[0,1]$ and then plays the corresponding strategy, i.e. the stopping time $\tau(\cdot,v)$. We also refer to \cite{touzi2002continuous} for a discussion of the different notions of randomized stopping times.
    \end{remark}

    \smallskip Over the space $\tilde{\mathcal{S}}$ we consider the Baxter-Chacon topology (see \cite{baxter_chacon}). Fix a $\sigma$-algebra $\mathcal{A}$ over $\Omega$. Then, for any sequence $\tilde{\tau}_n$ and $\tilde{\tau}$ in $\tilde{\mathcal{S}}$, $\tilde{\tau}_n $ converges to $\tilde{\tau}$ in the Baxter-Chacon topology, i.e. $\tilde{\tau}_n  \xrightarrow{BC}  \tilde{\tau}$, if and only if 
    \begin{equation*}
        \tilde{\mathbb{E}} \quadre{\,Y\,\varphi(\tilde{\tau}_n)}\to\tilde{\mathbb{E}} \quadre{\,Y\,\varphi(\tilde{\tau}_n)}\quad \quad \forall \,Y\in \mathbb{L}^1(\Omega,\mathcal{A},\mathbb{P}),\,\,\varphi\in C^b([0,+\infty]).
    \end{equation*}
    Trivially, the Baxter-Chacon convergence implies weak convergence, by choosing $Y=1$. The randomization procedure just depicted allows us to retrieve the fundamental property that any limit of randomized stopping times w.r.t. the Baxter-Chacon convergence is again a randomized stopping time, as shown in the next result.
    \begin{theorem} \label{theo_compact_BC}
        (cf. Theorem 1.5 in \cite{baxter_chacon}) The space $\tilde{\mathcal{S}}$ with the topology induced by the Baxter-Chacon convergence is compact. If, moreover, $\mathcal{A}$ is countably generated, it is sequentially compact.       
    \end{theorem}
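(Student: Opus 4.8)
The plan is to identify a randomized stopping time with the probability-measure-valued process it induces and then realize $\tilde{\mathcal{S}}$ as a closed subset of a compact space of such processes, exactly as in \cite{baxter_chacon}. Concretely, to each $\tilde\tau\in\tilde{\mathcal{S}}$ I associate, for $\pr$-a.e. $\omega$, the sub-probability measure $\mu^{\tilde\tau}_\omega$ on $[0,T]$ defined by $\mu^{\tilde\tau}_\omega([0,t]) := \mathrm{Leb}\{v\in[0,1]:\tilde\tau(\omega,v)\le t\}$; the left-continuity and monotonicity of $v\mapsto\tilde\tau(\omega,v)$ guarantee that this is a well-defined Borel (sub-)probability measure, and the adaptedness $\{\tilde\tau\le t\}\in\tilde{\F}_t=\F_t\times\mathcal B_{[0,1]}$ translates into $\omega\mapsto\mu^{\tilde\tau}_\omega([0,t])$ being $\F_t$-measurable, i.e. the cumulative distribution function process is $\mathbb F$-optional (more precisely progressively measurable). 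Thus $\tilde{\mathcal{S}}$ embeds into the set $\mathcal{M}$ of $\mathbb F$-adapted random sub-probability measures on $[0,T]$ whose distribution-function process is optional; one checks the embedding is a bijection onto $\mathcal{M}$ by the usual generalized-inverse construction $\tilde\tau(\omega,v):=\inf\{t:\mu^{\tilde\tau}_\omega([0,t])\ge v\}$.

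Next I would put on $\mathcal{M}$ the weak-$*$ topology it inherits as a subset of the dual of $\mathbb L^1(\Omega,\mathcal A,\pr)\otimes C^b([0,T])$: a net $\tilde\tau_n\xrightarrow{BC}\tilde\tau$ precisely when $\tildeexpect{Y\,\varphi(\tilde\tau_n)}=\expect{Y\int_{[0,T]}\varphi\,d\mu^{\tilde\tau_n}_\cdot}\to\expect{Y\int_{[0,T]}\varphi\,d\mu^{\tilde\tau}_\cdot}$ for all $Y\in\mathbb L^1(\Omega,\mathcal A,\pr)$ and $\varphi\in C^b([0,T])$. Since $[0,T]$ (with $d_\rho$) is compact, $C([0,T])$ is separable, so the relevant dual ball is weak-$*$ compact; the family of operators $\Lambda_{\tilde\tau}:(Y,\varphi)\mapsto\tildeexpect{Y\varphi(\tilde\tau)}$ is uniformly bounded (by $\|Y\|_1\|\varphi\|_\infty$, using that the $\mu^{\tilde\tau}_\omega$ are sub-probabilities), so $\{\Lambda_{\tilde\tau}:\tilde\tau\in\tilde{\mathcal S}\}$ lies in a weak-$*$ compact set by Banach–Alaoglu. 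Compactness of $\tilde{\mathcal S}$ itself then follows once I show the image is \emph{closed}: given a net $\Lambda_{\tilde\tau_n}\to\Lambda$, the limit functional is still positive, still satisfies $\Lambda(Y,\mathbf 1)=\expect{Y}$ (total mass one, i.e. the limiting measures are genuine probabilities, not just sub-probabilities — here is where total mass is preserved because $[0,T]$ is compact, avoiding escape of mass), and, by testing against $Y\in\mathbb L^1(\Omega,\F_t,\pr)$ and $\varphi\ge 0$ supported on $(t,T]$, still encodes an optional distribution-function process; hence $\Lambda=\Lambda_{\tilde\tau}$ for the $\tilde\tau\in\tilde{\mathcal S}$ reconstructed from the limiting random measure.

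For the second assertion, when $\mathcal A$ is countably generated I would upgrade compactness to sequential compactness: fix a countable dense subset $\{Y_k\}$ of $\mathbb L^1(\Omega,\mathcal A,\pr)$ and a countable dense subset $\{\varphi_j\}$ of $C([0,T])$; given a sequence $(\tilde\tau_n)$, a diagonal argument extracts a subsequence along which $\tildeexpect{Y_k\varphi_j(\tilde\tau_n)}$ converges for every $j,k$, and the uniform bound $|\tildeexpect{Y\varphi(\tilde\tau_n)}|\le\|Y\|_1\|\varphi\|_\infty$ propagates this to all $(Y,\varphi)$ by density; the limit defines $\Lambda_{\tilde\tau}$ as above. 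I expect the main obstacle to be the two measure-theoretic bookkeeping points rather than any deep analysis: first, verifying carefully that the embedding $\tilde{\mathcal S}\leftrightarrow\mathcal M$ is a homeomorphism onto a \emph{closed} set — in particular that the limiting functional genuinely comes from an $\mathbb F$-optional, mass-one random measure and not from something that has lost adaptedness or mass; and second, handling the case $T=+\infty$, where compactness of $([0,T],d_\rho)$ is exactly what is needed so that a diverging sequence of stopping times converges to the point $+\infty$ rather than escaping, which is why the paper introduced the metric $d_\rho$ in the first place. Since this is Theorem 1.5 of \cite{baxter_chacon}, I would in fact cite that reference for the bulk of the argument and only spell out the (routine) adaptation to the present filtration and to the compactified time axis $([0,T],d_\rho)$.
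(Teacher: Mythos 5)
This theorem is imported verbatim from the literature: the paper gives no proof of it and simply cites Theorem 1.5 of Baxter--Chacon, which is exactly what you propose to do after your sketch. Your sketch is moreover a faithful reconstruction of that reference's argument (identification of randomized stopping times with adapted random probability measures on the compactified time axis, an Alaoglu/Tychonoff-type compactness argument with a closedness check for adaptedness and mass, and metrizability via separability of $\mathbb{L}^1(\Omega,\mathcal{A},\pr)$ and $C([0,T])$ for sequential compactness), so it is correct and consistent with the paper's treatment.
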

   The randomization procedure does not modify the value of the optimal stopping problem. Indeed, for any mean-field interaction term $m\in\randproboptstopp$ and randomized stopping time $\tilde{\tau}$, recalling the definition of $J$ in (\ref{eq_def_functional}) let
    \begin{equation}
        \tilde{J}(\tilde{\tau},m):= \, \tilde{\mathbb{E}} \quadre{\int_0^{\tilde{\tau}}\,e^{-\rho t} h(t,m)\,dt+  e^{-\rho \tilde{\tau}}g(\tilde{\tau},m)}=\int_{[0,1]} J(\tilde{\tau}(\cdot,v),m)\,dv. \label{eq_def_tildeJ}
    \end{equation}
    We claim that, for any fixed $m\in\randproboptstopp$, optimizing  $\tilde{J}(\tilde{\tau},m)$ over all $\tilde{\tau}\in\tilde{\mathcal{S}}$ coincides with optimizing $J(\tau,m)$ over $\tau\in\mathcal{S}$. This allows us to work only in the setting of the relaxed stopping times henceforth.
    \begin{lemma} \label{lemma_relaxed}
        For any fixed $m$, $ \sup_{\tilde{\tau}\in\tilde{\mathcal{S}}}\tilde{J}(\tilde{\tau},m)=\sup_{\tau\in\mathcal{S}}J(\tau,m)$.
    \end{lemma}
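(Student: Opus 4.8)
The plan is to prove the two inequalities $\sup_{\tilde\tau\in\tilde{\mathcal S}}\tilde J(\tilde\tau,m)\ge\sup_{\tau\in\mathcal S}J(\tau,m)$ and $\sup_{\tilde\tau\in\tilde{\mathcal S}}\tilde J(\tilde\tau,m)\le\sup_{\tau\in\mathcal S}J(\tau,m)$ separately. The first one is immediate: via the natural inclusion $i:\mathcal S\to\tilde{\mathcal S}$, any $\tau\in\mathcal S$ gives $i(\tau)\in\tilde{\mathcal S}$ with $\tilde\tau(\omega,v)=\tau(\omega)$ independent of $v$, and then the identity $\tilde J(i(\tau),m)=\int_{[0,1]}J(\tau,m)\,dv=J(\tau,m)$ gives $\sup_{\tilde{\mathcal S}}\tilde J\ge\sup_{\mathcal S}J$.

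For the reverse inequality I would use the rightmost expression in \eqref{eq_def_tildeJ}, namely $\tilde J(\tilde\tau,m)=\int_{[0,1]}J(\tilde\tau(\cdot,v),m)\,dv$. The key point is that for each fixed $v\in[0,1]$ the section $\omega\mapsto\tilde\tau(\omega,v)$ is (a version of) an $\mathbb F$-stopping time: since $\{\tilde\tau\le t\}\in\tilde{\mathcal F}_t=\mathcal F_t\times\mathcal B_{[0,1]}$, by a standard section/Fubini argument the slice $\{\omega:\tilde\tau(\omega,v)\le t\}$ belongs to $\mathcal F_t$ for Lebesgue-a.e. $v$, simultaneously for all rational $t$ (hence, by right-continuity of $t\mapsto\{\tilde\tau\le t\}$, for all $t\in[0,T]$). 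Thus for a.e. $v$ we have $\tilde\tau(\cdot,v)\in\mathcal S$ and so $J(\tilde\tau(\cdot,v),m)\le\sup_{\tau\in\mathcal S}J(\tau,m)$. Integrating over $v\in[0,1]$ yields $\tilde J(\tilde\tau,m)=\int_{[0,1]}J(\tilde\tau(\cdot,v),m)\,dv\le\sup_{\tau\in\mathcal S}J(\tau,m)$, and taking the supremum over $\tilde\tau\in\tilde{\mathcal S}$ gives the claim.

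Before running that argument I would first justify the second equality in \eqref{eq_def_tildeJ}, i.e. $\tilde{\mathbb E}[\int_0^{\tilde\tau}e^{-\rho t}h(t,m)\,dt+e^{-\rho\tilde\tau}g(\tilde\tau,m)]=\int_{[0,1]}J(\tilde\tau(\cdot,v),m)\,dv$, which is just Fubini's theorem on $\tilde\Omega=\Omega\times[0,1]$ with $\tilde{\mathbb P}=\mathbb P\otimes dv$, using the integrability in Assumption \ref{hp_optstopp} to guarantee the hypotheses of Fubini (for the running part, $\mathbb E\int_0^T e^{-\rho t}|h(t,m)|\,dt<\infty$; for the terminal part, the class (D) property controls $e^{-\rho\tilde\tau}g(\tilde\tau,m)$ uniformly over stopping-time slices, hence it is $\tilde{\mathbb P}$-integrable). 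One also has to check joint measurability of the maps $(\omega,v)\mapsto\int_0^{\tilde\tau(\omega,v)}e^{-\rho t}h(\omega,t,m)\,dt$ and $(\omega,v)\mapsto e^{-\rho\tilde\tau(\omega,v)}g(\omega,\tilde\tau(\omega,v),m)$; this follows from progressive measurability of $h$, optionality of $e^{-\rho\cdot}g(\cdot,m)$, and $\tilde{\mathcal F}_t$-measurability of $\tilde\tau$.

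The main obstacle I expect is the measurable-section step: showing that $\tilde\tau(\cdot,v)$ is genuinely an $\mathbb F$-stopping time for a.e.\ $v$, uniformly in $t$. The product-$\sigma$-algebra hypothesis $\tilde{\mathcal F}_t=\mathcal F_t\times\mathcal B_{[0,1]}$ makes the slice argument clean for each fixed $t$, but one must handle the uncountable family $\{t\in[0,T]\}$; the standard fix is to work with rational $t$ and invoke that $\{\tilde\tau\le t\}=\bigcap_{q>t,\,q\in\mathbb Q}\{\tilde\tau<q\}$ together with right-continuity of $\mathbb F$, so that a null set of $v$'s suffices. A cosmetic subtlety is the convention on $\{\tilde\tau=+\infty\}$ when $T=+\infty$ and the use of $\limsup_{t\to\infty}e^{-\rho t}g(t,m)$; one checks this is consistent under the slicing and under Fubini because the terminal term is dominated in $L^1(\tilde{\mathbb P})$.
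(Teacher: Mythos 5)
Your proposal is correct and follows essentially the same route as the paper: slice $\tilde{\tau}$ at a fixed $v$, verify that the slice $\tilde{\tau}(\cdot,v)$ is an $\mathbb{F}$-stopping time, and integrate $J(\tilde{\tau}(\cdot,v),m)\le \sup_{\tau\in\mathcal{S}}J(\tau,m)$ over $v\in[0,1]$. The only remark is that the section step you flag as the main obstacle is simpler than you anticipate: every $v$-section of a set in the product $\sigma$-algebra $\F_t\times\mathcal{B}_{[0,1]}$ lies in $\F_t$, so $\{\omega:\tilde{\tau}(\omega,v)\le t\}\in\F_t$ for \emph{every} $v$ and every $t$, and no null set, rational approximation, or right-continuity argument is needed --- which is exactly how the paper argues.
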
 
    \begin{proof}
        Since every $\mathbb{F}$-stopping time $\tau$ is  also a relaxed stopping time and $J(\tau,m)=\tilde{J}(\tau,m)$, clearly $\sup_{\tilde{\tau}\in \tilde{\mathcal{S}}} \tilde{J}(\tilde{\tau},m)\geq \sup_{\tau\in\mathcal{S}} J(\tau,m).$
     Moreover, for any relaxed stopping time $\tilde{\tau}$ and fixed $v\in[0,1]$, let $\tau_v(\omega):=\tilde{\tau}(\omega,v)$. We claim that $\tau_v$ is a $\mathbb{F}$-stopping time. Indeed, let $t\in[0,T]$:
     \begin{equation*}
         \graffe{\omega:\, \tau_v(\omega)\leq t}\times \graffe{v}=\graffe{ (\omega,x):\, \tilde{\tau}(\omega,x)\leq t }\,\cap\,(\Omega\times\graffe{v}) \in \F_t\times \mathcal{B}_{[0,1]}
     \end{equation*}
     and in particular $\graffe{\omega:\, \tau_v(\omega)\leq t}\in\F_t$. Thus,
     \begin{equation*}
         \tilde{J}(\tilde{\tau},m)= \int_0^1 J(\tau_v,m)\,dv\leq \int_0^1 \sup_{\tau\in\mathcal{S}} J(\tau,m)\,dv=\sup_{\tau\in\mathcal{S}} J(\tau,m). \qedhere
     \end{equation*}
     \end{proof}

    \smallskip We are now able to state our relaxed definition of a strong mean-field equilibrium (i.e. a relaxed version of Definition \ref{def_mfequilibrium}), where we allow the stopping times to be randomized but we force the mean-field interaction term to again be a $\G$-measurable random variable defined over $\Omega$.This is done by conditioning with respect to the $\sigma$-algebra $\G\times \{\emptyset,[0,1] \}$. As any random variable measurable with respect to the trivial $\sigma$-algebra is a.s. constant, any random probability measure measurable with respect to $\G\times\{\emptyset,[0,1] \} $ is in fact a $\G$-measurable random variable on $\Omega$. This formulation ensures that the strong nature of the solutions is preserved, meaning that the mean-field interaction terms remain adapted to the common noise.
    
     \begin{definition} \label{def_relaxedNE}
     A couple $(m^*,\tilde{\tau}^*)$, where $m^*\in\randproboptstopp$ and $\tilde{\tau}\in \tilde{\mathcal{S}}$, is a \textbf{strong randomized $\varepsilon$-mean-field equilibrium} if 
    \begin{equation*}
        \tilde{J}(\tilde{\tau}^*,m^*)\geq\sup_{\tilde{\tau}\in \tilde{\mathcal{S}}} \tilde{J}(\tilde{\tau},m^*)-\varepsilon  \quad \text{and} \quad m^*=\mathcal{L}( \tilde{\tau}^* \,|\,\G\times \graffe{\emptyset, [0,1]}).
    \end{equation*}
    When $\varepsilon=0$, we say that it is a \textbf{strong randomized mean-field equilibrium}. 
    \end{definition}

    \medskip As previously discussed, we will prove the existence of a strong randomized mean-field equilibrium by studying the convergence for $\varepsilon\to 0$ of the sequence of strong (non randomized) mean-field equilibria $(m^{*,\,\varepsilon},\tau^{*,\,\varepsilon})$, whose existence is ensured by Theorem \ref{theo_optstopp}. Indeed, first we will prove that there exists a couple $(m^*,\tilde{\tau}^*)$ and a suitable subsequence $\varepsilon_k\to 0 $ such that $m^{*,\,\varepsilon_k}  \xrightarrow[]{wP} m^*$ and $\tau^{*,\,\varepsilon_k}\xrightarrow[]{BC} \tilde{\tau}^*$. Secondly, we will prove that this couple is actually a strong randomized mean-field equilibrium, according to Definition \ref{def_relaxedNE}.

    \smallskip In order to accomplish that, we need here to make further mild assumptions on the structure of the reward functions $h$ and $g$. In particular, we assume they depend on the mean-field interaction term $m$ only through an auxiliary stochastic process $X^m$ with continuous paths. For $T<+\infty$, we equip the space $C([0,T])$ of continuous functions $f:[0,T]\to \R$ with the uniform norm $\norm{\cdot}_\infty$, while for $T=\infty$ we equip the space $C([0,+\infty))$ with the topology of the uniform convergence over compact sets $K$, which is metrizable with metric $\norm{\cdot}_{\infty,\,K}$ (cf. example IV.2.2 in \cite{conway1994course}).

    \smallskip \begin{theorem}
          Under Assumption \ref{hp_optstopp}, let $\G$ be generated by a countable partition of $\Omega$ and the process $(e^{-\rho t}g(t,m)\,)_{t\in[0,T]}$ have a.s.\ left upper-semicontinuous paths for any $m\in \randproboptstopp$. Moreover, assume that
          \begin{enumerate}
              \item[a)] there exist a continuous and bounded function $\hat{g}:\, [0,T]\times\R\to\R$ and a function $\hat{h}:\,[0,T]\times\,C([0,T])\to\R$ such that for any $m\in \randproboptstopp$ there exists an adapted stochastic process $X^m$ with continuous paths and
              \begin{equation*}
                  h(\omega,t,m)=\hat{h}(t,X^m(\omega)) \quad \text{and} \quad  g(\omega,t,m)=\hat{g}(\,t,X^m_t(\omega))\,\indicator{\,t<+\infty} ;
              \end{equation*}
                \item[b)] for any $m^n\stackrel{wP}{\to} m^{\infty}$ in $\randproboptstopp$, we have
                \begin{align*}
                     \mathbb{E} \Bigl[\,\norm{ X^{m^n}_t-X_t^{m^\infty}}_{\infty,\,K}  \,&+\sup_{t\in[0,T]} e^{-\rho\,t}\abs{\hat{g}(t,X^{m^n}_t)-\hat{g}(t,X^{m^{\infty}}_t)}\\
                     &+\int_0^{T} e^{-\rho\,t}\abs{ \hat{h}(t,X^{m^n})-\hat{h}(t,X^{m^{\infty}})}\,dt \, \Bigr]\to 0.
                \end{align*}
          \end{enumerate}
        Then, there exists a strong randomized mean-field equilibrium as in Definition \ref{def_relaxedNE}.

    \end{theorem}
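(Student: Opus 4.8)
The plan is to obtain the equilibrium $(m^{*},\tilde\tau^{*})$ as a limit point of the strong $\varepsilon$-mean-field equilibria $(m^{*,\varepsilon},\tau^{*,\varepsilon})$ provided by Theorem~\ref{theo_optstopp} and then to verify Definition~\ref{def_relaxedNE}. Since $\G=\sigma(A_i:i\in\N)$ with $(A_i)_i$ a partition of $\Omega$, the map $m\mapsto(m|_{A_i})_{i\in\N}$ identifies $\randproboptstopp$ with $\bigl(\mathcal{P}([0,T])\bigr)^{\N}$ endowed with the product topology, and under this identification $wP$-convergence is precisely coordinatewise weak convergence. As $[0,T]$ with $d_\rho$ is a compact Polish space, $\mathcal{P}([0,T])$ is compact and metrizable, hence so is $\bigl(\mathcal{P}([0,T])\bigr)^{\N}$, which is therefore sequentially compact. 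Fixing $\varepsilon_n\downarrow 0$, I would thus extract a subsequence (not relabelled) along which $m^{*,\varepsilon_n}\xrightarrow{wP}m^{*}$ for some $m^{*}\in\randproboptstopp$.

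Only \emph{after} $m^{*}$ has been identified do I fix the reference $\sigma$-algebra for the Baxter--Chacon topology, taking $\mathcal{A}:=\sigma\bigl(\G,\,X^{m^{*}},\,(X^{m^{*,\varepsilon_n}})_{n\in\N}\bigr)\subseteq\F$. This $\mathcal{A}$ is countably generated, because $\G$ is and because each $X^{m}$ is a random variable with values in the Polish space $C([0,T])$. By Theorem~\ref{theo_compact_BC}, $\tilde{\mathcal{S}}$ is then sequentially compact for the $BC$-topology relative to $\mathcal{A}$, so along a further subsequence (still not relabelled) $\tau^{*,\varepsilon_n}\xrightarrow{BC}\tilde\tau^{*}$ for some $\tilde\tau^{*}\in\tilde{\mathcal{S}}$. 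It remains to check the two conditions of Definition~\ref{def_relaxedNE} for the pair $(m^{*},\tilde\tau^{*})$.

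For the consistency condition, fix $A\in\G$ and $\varphi\in C_b([0,T])$. Using that $\tau^{*,\varepsilon_n}$, seen in $\tilde{\mathcal{S}}$, does not depend on the auxiliary variable, together with the equilibrium identity $m^{*,\varepsilon_n}=\mathcal{L}(\tau^{*,\varepsilon_n}\,|\,\G)$, one gets $\E[\indicator{A}\langle m^{*,\varepsilon_n},\varphi\rangle]=\tilde{\E}[\indicator{A}\,\varphi(\tau^{*,\varepsilon_n})]$. Letting $n\to\infty$: the right-hand side tends, by $BC$-convergence with $Y=\indicator{A}\in\mathbb{L}^1(\Omega,\mathcal{A},\P)$ (legitimate since $\G\subseteq\mathcal{A}$), to $\tilde{\E}[\indicator{A}\,\varphi(\tilde\tau^{*})]=\E[\indicator{A}\,\langle\mathcal{L}(\tilde\tau^{*}\,|\,\G\times\{\emptyset,[0,1]\}),\varphi\rangle]$; the left-hand side tends to $\E[\indicator{A}\langle m^{*},\varphi\rangle]$, because $\langle m^{*,\varepsilon_n},\varphi\rangle\to\langle m^{*},\varphi\rangle$ in probability (Lemma~\ref{lemma_wPconv}, using that compactness of $[0,T]$ gives $\hat{C}^b_+([0,T])=C^b_+([0,T])$ and decomposing $\varphi$ into nonnegative parts) and $\abs{\langle m^{*,\varepsilon_n},\varphi\rangle}\le\norm{\varphi}_\infty$. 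Hence $\E[\indicator{A}\langle m^{*},\varphi\rangle]=\E[\indicator{A}\langle\mathcal{L}(\tilde\tau^{*}\,|\,\G\times\{\emptyset,[0,1]\}),\varphi\rangle]$ for every $A$ in the $\pi$-system of finite unions of the $A_i$ and every $\varphi\in C_b([0,T])$; running $\varphi$ through a countable convergence-determining family yields $m^{*}=\mathcal{L}(\tilde\tau^{*}\,|\,\G\times\{\emptyset,[0,1]\})$ $\P$-a.s.

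For the optimality condition it suffices to establish $\tilde{J}(\tau^{*,\varepsilon_n},m^{*,\varepsilon_n})\to\tilde{J}(\tilde\tau^{*},m^{*})$ and $\sup_{\tilde\tau\in\tilde{\mathcal{S}}}\tilde{J}(\tilde\tau,m^{*,\varepsilon_n})\to\sup_{\tilde\tau\in\tilde{\mathcal{S}}}\tilde{J}(\tilde\tau,m^{*})$: indeed, by Lemma~\ref{lemma_relaxed} and the definition of a strong $\varepsilon_n$-equilibrium, $\tilde{J}(\tau^{*,\varepsilon_n},m^{*,\varepsilon_n})\ge\sup_{\tilde\tau\in\tilde{\mathcal{S}}}\tilde{J}(\tilde\tau,m^{*,\varepsilon_n})-\varepsilon_n$, and passing to the limit gives $\tilde{J}(\tilde\tau^{*},m^{*})\ge\sup_{\tilde\tau\in\tilde{\mathcal{S}}}\tilde{J}(\tilde\tau,m^{*})$, hence equality. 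Continuity in the interaction term, uniformly in $\tilde\tau$, is immediate from assumption b): since on $\{\tilde\tau=+\infty\}$ the terminal reward equals $\limsup_{t\to+\infty}e^{-\rho t}\hat g(t,X^{m}_t)=0$ for every $m$ (as $\hat g$ is bounded), $\abs{\tilde{J}(\tilde\tau,m^{*,\varepsilon_n})-\tilde{J}(\tilde\tau,m^{*})}$ is bounded, uniformly in $\tilde\tau$, by the sum of the last two terms inside the expectation in assumption b), which tends to $0$; this also yields the convergence of the suprema. The crucial point is the $BC$-continuity of $\tilde\tau\mapsto\tilde{J}(\tilde\tau,m^{*})$: I would write $\tilde{J}(\tilde\tau,m^{*})=\tilde{\E}[\Phi(\cdot,\tilde\tau)]+\tilde{\E}[G(\cdot,\tilde\tau)]$, with $\Phi(\omega,s):=\int_0^s e^{-\rho t}\hat h(t,X^{m^{*}}(\omega))\,dt$ and $G(\omega,t):=e^{-\rho t}\hat g(t,X^{m^{*}}_t(\omega))\,\indicator{t<+\infty}$. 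Then $s\mapsto\Phi(\omega,s)$ is (absolutely) continuous on $[0,T]$ with a finite limit at $+\infty$ by Assumption~\ref{hp_optstopp}(ii), and $t\mapsto G(\omega,t)$ is continuous on $[0,T]$ because $X^{m^{*}}$ has continuous paths and $\hat g$ is continuous and bounded (so $G$ extends continuously by $0$ at $+\infty$); both are $\mathcal{A}$-measurable, with $\E[\norm{\Phi(\cdot,\cdot)}_\infty]\le\E[\int_0^T e^{-\rho t}\abs{\hat h(t,X^{m^{*}})}\,dt]<\infty$ and $\norm{G}_\infty\le\norm{\hat g}_\infty$, so that $\Phi,G\in\mathbb{L}^1\bigl(\Omega,\mathcal{A};C([0,T])\bigr)$, $C([0,T])$ being a separable Banach space since $[0,T]$ is compact. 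It then remains to show the general fact that $\tilde\tau_n\xrightarrow{BC}\tilde\tau$ relative to $\mathcal{A}$ and $F\in\mathbb{L}^1(\Omega,\mathcal{A};C([0,T]))$ imply $\tilde{\E}[F(\cdot,\tilde\tau_n)]\to\tilde{\E}[F(\cdot,\tilde\tau)]$; this follows by approximating $F$, in the Bochner $\mathbb{L}^1$-norm, by finite sums $\sum_i Y_i\varphi_i$ with $Y_i\in\mathbb{L}^1(\Omega,\mathcal{A},\P)$ and $\varphi_i\in C_b([0,T])$, for which $BC$-convergence gives exactly the claim, the error being controlled uniformly in $n$ by $\E[\norm{F-\sum_i Y_i\varphi_i}_\infty]$. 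Applying this with $F=\Phi$ and $F=G$ gives $\tilde{J}(\tau^{*,\varepsilon_n},m^{*})\to\tilde{J}(\tilde\tau^{*},m^{*})$, and together with the uniform continuity in $m$ this closes the argument. I expect this $BC$-continuity to be the main obstacle: it is what forces $\mathcal{A}$ to be fixed only after $m^{*}$ is known (so that $\Phi$ and $G$ are $\mathcal{A}$-measurable) and what requires recasting both reward terms as $F(\cdot,\tilde\tau)$ for an integrable random function genuinely \emph{continuous} — not merely left upper-semicontinuous — in the time variable, which is precisely where assumption a) is used; the case $T=+\infty$ is a minor addendum, absorbed by the $e^{-\rho t}$ discounting and the compactness of $[0,+\infty]$ under $d_\rho$.
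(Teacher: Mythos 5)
Your proposal is correct and follows the same skeleton as the paper's proof: extract a $wP$-convergent subsequence of the $\varepsilon$-equilibria from Theorem \ref{theo_optstopp}, fix the Baxter--Chacon reference $\sigma$-algebra $\mathcal{A}$ only after the limit interaction term is identified, extract a further $BC$-convergent subsequence of stopping times, verify consistency by testing against $\indicator{G}\,\varphi(\cdot)$, and pass to the limit in the $\varepsilon_k$-optimality inequality. The genuine difference lies in the key continuity step. The paper treats the terminal reward by proving joint weak convergence of $(X^{m^{*,\varepsilon_k}},\tau^{*,\varepsilon_k})$ (tightness plus convergence of finite-dimensional distributions, where the $BC$ convergence is invoked against $f(\bar X_{t_1},\dots,\bar X_{t_N})\in\mathbb{L}^1(\sigma(\bar X))$), then applies Skorokhod's representation theorem and argues pathwise; the running-reward term $\int_0^{\tau^{*,\varepsilon_k}}e^{-\rho t}h(t,\bar m)\,dt$ is handled separately via Fubini, dominated convergence, and an approximation of indicators by continuous functions. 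You instead split the error into a part uniform in the stopping time (controlled directly by assumption b)) and a part at fixed $m^{*}$, and for the latter you recast both reward terms as $\tilde{\mathbb{E}}[F(\cdot,\tilde\tau)]$ for $\mathcal{A}$-measurable $F\in\mathbb{L}^1(\Omega,\mathcal{A};C([0,T]))$ and prove a single approximation lemma: Bochner-simple functions $\sum_i Y_i\varphi_i$ are dense and $BC$-convergence handles each summand, with the error uniform in $n$. This is a clean and valid alternative that avoids Skorokhod entirely and unifies the treatment of the running and terminal rewards; what it requires in exchange is exactly what assumption a) provides, namely that $t\mapsto\Phi(\omega,t)$ and $t\mapsto G(\omega,t)$ are genuinely continuous on the compactified interval and measurable with respect to a countably generated $\mathcal{A}$ containing $\sigma(X^{m^{*}})$ (for $\Phi$ this tacitly uses Borel measurability of $\hat h(t,\cdot)$ on $C([0,T])$, an implicit assumption the paper also makes when asserting $e^{-\rho t}h(t,\bar m)\in\mathbb{L}^1(\sigma(X^{\bar m}))$). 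The paper's route, by contrast, isolates where path continuity of $X^{\bar m}$ and continuity of $\hat g$ enter, at the cost of a longer argument.
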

    \begin{proof} The proof is organized in various steps.
    \vspace{0,25cm}
    
    \textit{Step a.}  Since all the assumptions of Theorem \ref{theo_optstopp} hold here, for any $\varepsilon>0$ there exists a strong (non randomized) $\varepsilon$-mean-field equilibrium $(\tau^{*,\,\varepsilon},m^{*,\,\varepsilon})$ as in Definition \ref{def_mfequilibrium}. In particular,
        \begin{equation*}
            m^{*,\,\varepsilon}\in\mathcal{H}:=\graffe{\mathcal{L}(\,\tau\,|\,\G):\,\tau\in\mathcal{S}}.
        \end{equation*}
        The set $\mathcal{H}$ is (sequentially) relatively compact w.r.t.\ the weak convergence in probability, as shown in the proof of Theorem \ref{theo_optstopp}. In particular, there exists a random probability measure $\bar{m}$ in $\randproboptstopp$ such that, up to a sub-sequence $\varepsilon_k\to 0$,
        \begin{equation*}
            m^{*,\,\varepsilon_k}\xrightarrow[]{wP} \bar{m}.
        \end{equation*}
        We define now $\mathcal{A}:=\G \,\vee\,\sigma(X^{\bar{m}})$ as the $\sigma$-algebra for the Baxter-Chacon convergence (cf. Theorem \ref{theo_compact_BC} and the text before it). Since $\mathcal{A}$ is countably generated,  due to Theorem \ref{theo_compact_BC} the set of randomized stopping times is sequentially compact. In particular, there exists a  $\bar{\tau}\in\tilde{\mathcal{S}}$ such that, up to a sub-sequence,
        \begin{equation*}
            \quad \tau^{*,\,\varepsilon_k}\xrightarrow[]{BC}\bar{\tau}.
        \end{equation*}

        \medskip \noindent \textit{Step b.} We claim that 
        \begin{equation*}
            \bar{m}=\mathcal{L} (\, \bar{\tau} \,|\,\G\times \graffe{\emptyset,[0,1]})
        \end{equation*}
        or, equivalently, that for any $f\in \hat{C}^b_+\,( [0,T])$
        \begin{equation}
            \bar{m}\,f= \tildecondexp{ f(\bar{\tau}) }{\G\times \graffe{\emptyset, [0,1]}}. \label{eq_dim_existrandom_claim}
        \end{equation}

        \smallskip Fix any $f\in \hat{C}^b_+\,( [0,T])$.  We notice that $\bar{m}\,f$ is a $\G$-measurable random variable since $\bar{m}\in\randproboptstopp$. Hence, Claim (\ref{eq_dim_existrandom_claim}) is proved if 
        \begin{equation*}
             \expect{ \indicator{G}\,\,\bar{m}\,f }= \tildeexpect{\indicator{G}\,\, f(\bar{\tau}) }, \quad \quad \forall\,\,G\in\G.
         \end{equation*}
        On the one hand, due to $m^{*,\,\varepsilon_k}\xrightarrow[]{wP}\bar{m}$ and Lemma \ref{lemma_wPconv}, 
        \begin{equation*}
              m^{*,\,\varepsilon_k}\,f=\condexp{f(\tau^{*,\,\varepsilon_k})}{\G} \xrightarrow[]{P} \bar{m}\,f,
        \end{equation*}
        and, in particular, up to a sub-sequence, the convergence may be assumed to hold almost surely. Therefore, for any fixed $G\in\G$
        \begin{equation*}
            \expect{ \indicator{G}\,\,  m^{*,\,\varepsilon_k}\,f }=\expect{\indicator{G}\,\, f(\tau^{*,\,\varepsilon_k}) }\to \expect{ \indicator{G}\,\,\bar{m}\,f },
        \end{equation*}
        due to the Dominated Convergence Theorem, as $f$ is bounded. On the other hand,
        \begin{align*}
            \expect{\indicator{G}\,\, f(\tau^{*,\,\varepsilon_k})}\xrightarrow[]{} \tildeexpect{\indicator{G}\,f(\bar{\tau})},
        \end{align*}
        due to the the definition of Baxter-Chacon convergence.  Therefore, Claim (\ref{eq_dim_existrandom_claim}) holds  for any $f\in \hat{C}^b_+\,( [0,T])$.

        \bigskip \noindent \textit{Step c.} Let $X^k:=X^{m^{*,\,\varepsilon_k}}$ and $\bar{X}:=X^{\bar{m}}$. We claim that 
        \begin{equation}
            \lim_{k\to+\infty} \abs{\tildeexpect{ \,e^{-\rho\, \tau^{*,\,\varepsilon_k}} g(\tau^{*,\,\varepsilon_k},m^{*,\,\varepsilon_k})-\,e^{-\rho\,\bar{\tau}}g(\bar{\tau},\bar{m}) }}= 0. \label{eq_dim_existence_1}
        \end{equation}
        We take inspiration from the proofs of Propositions 10 and 12 in \cite{coquet}.

        \medskip Since $m^{*,\,\varepsilon_k}\xrightarrow[]{wP}\bar{m}$, assumption b) in the statement of the theorem yields that $X^k$ converges a.e.\ to $\bar{X}$ w.r.t.\ the uniform convergence over compacts.  In particular, both the sequences $X^k$ and $\tau^{*,\,\varepsilon_k}$ converge weakly and thus are tight. The sequence $(X^k,\tau^{*,\,\varepsilon_k})$ is tight for the product topology. Now, in order to prove our Claim (\ref{eq_dim_existence_1}), it is sufficient to show the convergence of the finite-dimensional distributions. 

        \smallskip Let $t_1<\dots<t_N$ and $f:\R^N\to\R$ and $\phi:\R\to\R$ continuous bounded functions. Then,
        \begin{align*}
           &\abs{\,\tildeexpect{\,f(X^k_{t_1},\dots, X^k_{t_N})\,\phi(\tau^{*,\,\varepsilon_k})\,}-\tildeexpect{\,f(\bar{X}_{t_1},\dots, \bar{X}_{t_N})\,\phi(\bar{\tau})}\,}\\
           &\leq \abs{ \,\tildeexpect{ \,\tonde{f(X^k_{t_1},\dots, X^k_{t_N})-f(\bar{X}_{t_1},\dots, \bar{X}_{t_N})} \,\phi(\tau^{*,\,\varepsilon_k})\,}\,} \\
           &\quad\quad+ \abs{\, \tildeexpect{f(\bar{X}_{t_1},\dots, \bar{X}_{t_N})\, \tonde{ \phi(\tau^{*,\,\varepsilon_k})-\phi(\bar{\tau}) }}  } \\
           &\leq ||\phi||_\infty \, \expect{ \,|f(X^k_{t_1},\dots, X^k_{t_N})-f(\bar{X}_{t_1},\dots, \bar{X}_{t_N})\,|}\\
           &\quad\quad +\abs{\, \tildeexpect{f(\bar{X}_{t_1},\dots, \bar{X}_{t_N})\, \tonde{ \phi(\tau^{*,\,\varepsilon_k})-\phi(\bar{\tau}) }} }
        \end{align*}
        The last term on the very right-hand side of the last displayed equation converges to zero by definition of Baxter-Chacon convergence, since $f(\bar{X}_{t_1},\dots, \bar{X}_{t_N})\in\mathbb{L}^1(\sigma(\bar{X}))$. Moreover, we know that $\mathbb{E} [\, \sup_{i=1,\dots,N} \,|X^k_{t_i}-\bar{X}_{t_i}|\,]\to 0$ and the continuous mapping theorem yields the convergence to zero of the first term of the very right-hand side of the last displayed equation. Through a density argument, we can then expand this result to any continuous bounded function $f:\R^{N+1}\to \R$ and thus
        \begin{equation*}
            (X^k_{t_1},\dots, X^k_{t_N},\tau^{*,\,\varepsilon_k})\,\xrightarrow[]{w}\, (\bar{X}_{t_1},\dots, \bar{X}_{t_N},\bar{\tau}).
        \end{equation*}
        This fact and the tightness of the sequence $(X^k,\tau^{*,\,\varepsilon_k})$ imply $(X^k,\tau^{*,\,\varepsilon_k})\xrightarrow[]{w} (\bar{X},\bar{\tau}).$

        \medskip Now, thanks to Skorokhod's representation theorem, there exists a probability space $(\Omega',\mathbb{F}',\pr')$ supporting $Y^k,\,\sigma^k,\,\bar{Y},\bar{\sigma}$ such that 
        \begin{equation*}
            Y^k \sim X^k,\,\, \sigma^k\sim\tau^{*,\,\varepsilon_k},\,\,\bar{Y}\sim \bar{X},\,\, \bar{\sigma}\sim \bar{\tau} \quad \text{and} \quad (Y^k,\sigma^k)\xrightarrow[]{a.s.} (\bar{Y},\bar{\sigma}).
        \end{equation*}
        Hence,
        \begin{align}
            &\abs{\tildeexpect{ \,e^{-\rho\, \tau^{*,\,\varepsilon_k}} g(\tau^{*,\,\varepsilon_k},m^{*,\,\varepsilon_k})-\,e^{-\rho\,\bar{\tau}}g(\bar{\tau},\bar{m}) }} \nonumber \\
            &=\abs{ \mathbb{E}' \quadre{ e^{-\rho\, \sigma^k}\, \hat{g}(\sigma^k,Y^k_{\sigma^k}) \,\indicator{\sigma^k<+\infty}- e^{-\rho\,\bar{\sigma} } \hat{g}(\bar{\sigma},\bar{Y}_{\sigma})\,\indicator{\bar{\sigma}<+\infty} } } \nonumber\\
            &\leq \mathbb{E}' \quadre{\abs{   e^{-\rho\, \sigma^k}\, \hat{g}(\sigma^k,Y^k_{\sigma^k}) \,\indicator{\sigma^k,\,\bar{\sigma}<+\infty}- e^{-\rho\,\bar{\sigma} } \hat{g}(\bar{\sigma},\bar{Y}_{\sigma})\,\indicator{\bar{\sigma}<+\infty} } } \nonumber\\
            &\,\,\,\,\,\,\,+\,  \mathbb{E}' \quadre{ e^{-\rho\, \sigma^k}\,\abs{    \hat{g}(\sigma^k,Y^k_{\sigma^k}) }\,\indicator{\sigma^k<+\infty,\,\bar{\sigma}=+\infty}} \nonumber\\
            &\leq \mathbb{E}'\quadre{ \abs{e^{-\rho\, \sigma^k}\, \hat{g}(\sigma^k,Y^k_{\sigma^k}) -e^{-\rho\,\bar{\sigma} } \hat{g}(\bar{\sigma},\bar{Y}_{\sigma})}\,\indicator{\bar{\sigma}<+\infty} } \nonumber\\
            &\,\,\,\,\,\,\,+\,  \mathbb{E}' \quadre{ e^{-\rho\, \sigma^k}\,\abs{    \hat{g}(\sigma^k,Y^k_{\sigma^k}) }\,\indicator{\sigma^k<+\infty,\,\bar{\sigma}=+\infty}}, \label{eq_dim_existence_2}
        \end{align}
        where the last inequality is due to the fact that $\graffe{\bar{\sigma}<+\infty}\subseteq \graffe{\sigma^k<+\infty}$ definitely in $k$, since $\sigma^k\to \bar{\sigma}$ a.s. Now, Claim (\ref{eq_dim_existence_1}) is proved if the expectations on the very right hand-side of (\ref{eq_dim_existence_2}) converge to zero as $k\to+\infty$. On the one hand, on the set $\graffe{\bar{\sigma}=+\infty}$, since $\sigma^k\to +\infty$ a.s. the exponential term $e^{-\rho\, \sigma^k}$ goes to zero, while $\hat{g}$ is bounded. Hence, the second expectation of the right-hand side of (\ref{eq_dim_existence_2}) tends to zero due to the Dominated Convergence Theorem. On the other hand, a.e.\ on the set $\graffe{ \bar{\sigma}<+\infty}$ there exists a compact set $K$ in $[0,+\infty)$ such that $\bar{\sigma},\sigma^k\in K$ definitely in $k$. Then, since  $\bar{Y}$ has continuous paths, we conclude that
        \begin{align*}
            | \,Y^k_{\sigma^k}-\bar{Y}_{\bar{\sigma}}\, |&\leq | \, Y^k_{\sigma^k}-\bar{Y}_{\sigma^k}\,| +|\,\bar{Y}_{\sigma^k}-\bar{Y}_{\sigma}  \,|\leq \sup_{t\in K} |\,Y^k_t-\bar{Y}_t\,| +|\,\bar{Y}_{\sigma^k}-\bar{Y}_{\sigma}  \,|\to 0,
        \end{align*}
        i.e. $Y^k_{\sigma^k}$ converges a.s. to $\bar{Y}_{\bar{\sigma}}$. Then, the first expectation of  the right-hand side of (\ref{eq_dim_existence_2}) tends to zero again due to the Dominated Convergence Theorem.

        \bigskip\noindent \textit{Step d.} Finally, we claim that $\bar{\tau}$ maximizes $\tilde{J}(\tilde{\tau}, \bar{m})$ over all randomized stopping times. Due to Lemma \ref{lemma_relaxed}, we know that it is sufficient to check over the subset of non-randomized stopping times.

        \medskip Since $(\tau^{*,\,\varepsilon_k}, m^{*,\,\varepsilon_k})$ are strong $\varepsilon_k$-mean-field equilibria, we have that
        \begin{equation}
            J(\tau^{*,\,\varepsilon_k},  m^{*,\,\varepsilon_k})\geq \sup_{\tau\in\mathcal{S}} J(\tau,  m^{*,\,\varepsilon_k})-\varepsilon_k. \label{eq_dim_existence}
        \end{equation}
        Consider now the right-hand side of (\ref{eq_dim_existence}). For any fixed $k$, we know there exists an optimal stopping time $\sigma^k$ for $J(\cdot,m^{*,\,\varepsilon_k})$, which is sub-optimal for $J(\cdot, \bar{m})$. From these observations, we deduce that
        \begin{align*}
            &\sup_{\tau\in\mathcal{S}} J(\tau, {m^{*,\,\varepsilon_k}})-\sup_{\tau\in\mathcal{S}} J(\tau, \bar{m})\leq J(\sigma^k, m^{*,\,\varepsilon_k})-J(\sigma^k,\bar{m})\\
            &\leq \expect{ e^{-\rho\,\sigma^k} \tonde{\, g(\sigma^k,m^{*,\,\varepsilon_k})-g(\sigma^k,\bar{m} ) }+\int_0^{\sigma^k} e^{-\rho\,t} \tonde{h(t, m^{*,\,\varepsilon_k})-h(t,\bar{m})} \,dt }\\
            &\leq \expect{ \sup_{t\in[0,T]} e^{-\rho t} \abs{g(t,m^{*,\,\varepsilon_k})-g(t,\bar{m})} +\int_0^T e^{-\rho t}\, \abs{h(t,m^{*,\,\varepsilon_k})-h(t,\bar{m})}\,dt  }.
        \end{align*}
        With a similar argument (using now an optimal stopping time for $J(\cdot,\bar{m})$), we conclude that
        \begin{align*}
            &\abs{\sup_{\tau\in\mathcal{S}} J(\tau, {m^{*,\,\varepsilon_k}})-\sup_{\tau\in\mathcal{S}} J(\tau, \bar{m})} \\
            &\leq \expect{ \sup_{t\in[0,T]} e^{-\rho t} \abs{g(t,m^{*,\,\varepsilon_k})-g(t,\bar{m})} +\int_0^T e^{-\rho t}\, \abs{h(t,m^{*,\,\varepsilon_k})-h(t,\bar{m})}\,dt  }\to 0,
        \end{align*}
        where the convergence to zero follows by assumption since $m^{*,\,\varepsilon_k}\xrightarrow[]{wP}\bar{m}$. Therefore, the right-hand side of (\ref{eq_dim_existence}) tends to $ \sup_{\tau\in\mathcal{S}} J(\tau,\bar{m}).$

        \medskip We turn now to the left-hand side of (\ref{eq_dim_existence}):
        \begin{align*}
            \abs{ J(\tau^{*,\,\varepsilon_k},m^{*,\,\varepsilon_k})-\tilde{J}(\bar{\tau},\bar{m}) }\leq \,&   \abs{ \tildeexpect{e^{-\rho\,\tau^{*,\,\varepsilon_k}} g(\tau^{*,\,\varepsilon_k},m^{*,\,\varepsilon_k})-e^{-\rho\,\bar{\tau}}g(\bar{\tau},\bar{m} )}  } \\
            &+\abs{\tildeexpect{ \int_0^{\tau^{*,\,\varepsilon_k} }\, e^{-\rho t} \tonde{h(t,m^{*,\,\varepsilon_k})-h(t,\bar{m})}\,dt} }\\
            &+\abs{ \tildeexpect{\int_0^{\tau^{*,\,\varepsilon_k}}e^{-\rho t}h(t,\bar{m})\,dt- \int_0^{\bar{\tau}} e^{-\rho t}h(t,\bar{m})\,dt} }.
        \end{align*}
        The first expectation in the right-hand side goes to zero as $k\to+\infty$ by Step c. Moreover, as for the second expectation,
        \begin{equation*}
           \abs{\tildeexpect{ \int_0^{\tau^{*,\,\varepsilon_k} }\, e^{-\rho t} \tonde{h(t,m^{*,\,\varepsilon_k})-h(t,\bar{m})}\,dt}} \leq\tildeexpect{\int_0^T e^{-\rho t} \abs{ h(t,m^{*,\,\varepsilon_k})-h(t,\bar{m}) }\,dt},
        \end{equation*}
        which converges to zero by assumption. Lastly,
        \begin{align*}
            &\lim_k  \abs{ \tildeexpect{ \int_0^T \, e^{-\rho t} h(t,\bar{m})\, \tonde{ \indicator{t\leq \tau^{*,\,\varepsilon_k}}-\indicator{t\leq \bar{\tau}} }\,dt } }\\
            &\leq \lim_k \int_0^T \abs{\tildeexpect{  e^{-\rho t} h(t,\bar{m})\, \tonde{ \indicator{t\leq \tau^{*,\,\varepsilon_k}}-\indicator{t\leq \bar{\tau}} }}}\,dt\\
            &=\int_0^T \lim_k  \abs{\tildeexpect{  e^{-\rho t} h(t,\bar{m})\, \tonde{ \indicator{t\leq \tau^{*,\,\varepsilon_k}}-\indicator{t\leq \bar{\tau}} }}}\,dt,
        \end{align*}
        where the inequality stems from Fubini's theorem and the equality is due to the Dominated Convergence Theorem. Indeed,
        \begin{equation*}
             \abs{\tildeexpect{  e^{-\rho t} h(t,\bar{m})\, \tonde{ \indicator{t\leq \tau^{*,\,\varepsilon_k}}-\indicator{t\leq \bar{\tau}} }}} \leq 2\, \expect{e^{-\rho t}\abs{h(t,\bar{m})}},
        \end{equation*}
        which is integrable over $[0,T]$ due to Assumption \ref{hp_optstopp}.  Moreover, for almost every $t\in [0,T]$ the random variable $ e^{-\rho t} h(t,\bar{m}) $ belongs to $\mathbb{L}^1(\sigma(X^{\bar{m}}))$. Therefore,  by definition of Baxter-Chacon convergence and by an approximation procedure of indicator functions with functions in $ C([0,T])$ (cf. Lemma 1.37 in \cite{kallenberg1997foundations}),  we have that
        \begin{equation*}
            \lim_k \abs{\tilde{\mathbb{E}} \quadre{e^{-\rho t}h(t,\bar{m}) \, \tonde{ \indicator{t\leq \tau^{*,\,\varepsilon_k}}-\indicator{t\leq \bar{\tau}} }} }=0
   \end{equation*}
    for a.e. $t$. Therefore, the left-hand side of (\ref{eq_dim_existence}) tends to $\tilde{J}(\bar{\tau},\bar{m})$ and we finally have
    \begin{equation*}
        \tilde{J}(\bar{\tau},\bar{m})\geq \sup_{\tau\in\mathcal{S}} J(\tau, \bar{m}). \qedhere
    \end{equation*}
    \end{proof}

    \smallskip
    \section{  Existence and Comparative Statics of strong equilibria under monotonicity conditions}\label{section_optstopp_mon}
   In this Section we look for strong mean-field equilibria in the sense of Definition \ref{def_mfequilibrium} in a setting with an additional order structure. Our aim is to establish an order of the extremal points of the sets of equilibria with respect to ordered sets of reward functions. To accomplish that, we will exploit the characterization of the smallest and largest optimal stopping times as hitting times of an auxiliary Bank-El Karoui's representation problem (cf. Proposition \ref{prop_opttimes_hat_l} in the Appendix).
   
   In particular, we consider the natural order $\R$ and over $\mathcal{P}( [0,T])$ the first-order stochastic dominance $\leq_p$. Thus, for any $\mu_1,\mu_2\in\mathcal{P}([0,T])$, $\mu_1\leq_p \mu_2$ if and only if
    \begin{equation*}
        \int_{[0,T]} \phi(x,t)\, d\mu_1(t)\leq\int_{[0,T]} \phi(t)\, dm_2(x,t) 
    \end{equation*}
    for any bounded increasing measurable function $\phi:\,[0,T]\to \R $. Due to Theorem 2 in \cite{kamae}, $(\mathcal{P}([0,T]),\leq_p)$ is a partially ordered Polish space. This partial order may be extended to $\randproboptstopp$: with some abuse of notation, we say that $m^1\leq_p m^2$ if and only if $m^1 (\omega)\leq_p m^2(\omega)$ a.s. 

    \medskip As we have previously discussed in Section \ref{subsection_optstopp_cont}, for any fixed $m\in\randproboptstopp$ the largest optimal stopping time $\tau^m$ for $J(\cdot,m)$ may be characterized through the solution $L^m$ of a related Bank-El Karoui's representation problem. Moreover, any couple $(\tau^m,m)$ where $m$ is a fixed point of the map $m\to\mathcal{L}(\tau^m\,|\,\G)$ is a strong mean-field equilibrium. In this setting, we wish to use Tarski's theorem to prove the existence of fixed points. Hence, we require the set of all relevant mean-field interaction terms to be a complete lattice and the monotonicity of the map $m\to \mathcal{L}(\, \tau^m\,|\,\G)$. Results of existence of mean-field equilibria in similar order settings are present in the literature, e.g. \cite{carmona2017mean}, \cite{submodularity}, \cite{dianetti2023unifying},  \cite{mf-bek} and  \cite{possamai2023mean}.

    \begin{theorem} (cf. Proposition 2.20 in \cite{mf-bek}) \label{theo_optstopp_2}
    Under Assumption \ref{hp_optstopp}, assume that for any $m^1,m^2\in V$ s.t. $m^1 \leq_p m^2$, one has
        \begin{align*}
               &e^{-\rho \,t} \tonde{g(t,m^1)-g(t,m^2)} \,\,\text{is a supermartingale},\\
               &h(\cdot,m^1)\leq h(\cdot,m^2).
        \end{align*}
    Then, there exists a strong mean-field equilibrium according to Definition \ref{def_mfequilibrium}.
    \end{theorem}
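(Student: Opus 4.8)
The plan is to realize the equilibrium as a fixed point of the map $\Phi\colon V\to V$, $\Phi(m):=\mathcal{L}(\tau^m\mid\G)$, where $\tau^m$ is the \emph{largest} maximizer of $J(\cdot,m)$ over $\mathcal{S}$, which by Proposition \ref{prop_opttimes_hat_l} is precisely the hitting time (\ref{eq_def_optimalstoppingtime}) of the running supremum $\hat L^m$ of the solution $L^m$ of the Bank--El Karoui representation problem associated with $(Y^m,f^m)$; here $V$ denotes the set of admissible mean-field interaction terms, which one verifies to be a complete lattice (for instance $V=\randproboptstopp$). Any fixed point $m^*=\Phi(m^*)$ yields a strong mean-field equilibrium $(m^*,\tau^{m^*})$ in the sense of Definition \ref{def_mfequilibrium}, since $\tau^{m^*}\in\argmax_{\tau\in\mathcal{S}}J(\tau,m^*)$ by construction and $m^*=\mathcal{L}(\tau^{m^*}\mid\G)$ because $m^*$ is a fixed point. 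Applying Tarski's fixed point theorem then requires two facts: that $(V,\le_p)$ is a complete lattice, and that $\Phi$ is order preserving.

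First I would check that $V$, ordered by the a.s.\ pointwise first-order stochastic dominance $\le_p$, is a complete lattice. Since $E=[0,T]$ is compact, I identify each $\mu\in\mathcal{P}([0,T])$ with its distribution function $F_\mu$ (with respect to the order on $[0,T]$), so that $\mu_1\le_p\mu_2$ iff $F_{\mu_1}\ge F_{\mu_2}$ pointwise; the supremum of an arbitrary family $\{\mu_\alpha\}_\alpha$ in $(\mathcal{P}([0,T]),\le_p)$ is then the probability measure whose distribution function is the right-continuous lower envelope of $\{F_{\mu_\alpha}\}_\alpha$ --- a genuine distribution function on the \emph{compact} interval $[0,T]$, so that no mass escapes --- and dually for the infimum. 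Hence $(\mathcal{P}([0,T]),\le_p)$ is a complete lattice, and it is a partially ordered Polish space by Theorem 2 in \cite{kamae}. Extending pointwise to $\G$-random probability measures preserves completeness; the only point requiring care is the $\G$-measurability of the lattice operations on uncountable families, which I would settle by an essential supremum/infimum argument as in \cite{mf-bek} (see also \cite{dianetti2023unifying}).

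The core of the proof is the monotonicity of $\Phi$. Fix $m^1\le_p m^2$ in $V$. The hypotheses translate into $f^{m^1}(\omega,t,l)\le f^{m^2}(\omega,t,l)$ for every $l\in\R$ --- since $f^m(\omega,t,l)=e^{-\rho t}(h(\omega,t,m)+l)$ is increasing in the reward $h$ --- together with the supermartingale property of the process $t\mapsto e^{-\rho t}\bigl(g(t,m^1)-g(t,m^2)\bigr)=Y^{m^1}_t-Y^{m^2}_t$. From the comparison/stability of Bank--El Karoui representations I would then deduce
\begin{equation*}
L^{m^1}\ge L^{m^2}\qquad\text{up to optional sections},
\end{equation*}
the heuristic being that these hypotheses make stopping relatively less attractive under $m^2$ than under $m^1$, enlarging the continuation region and pushing $L^{m^2}$ below $L^{m^1}$. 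Passing to running suprema gives $\hat L^{m^1}_t=\sup_{v<t}L^{m^1}_v\ge\sup_{v<t}L^{m^2}_v=\hat L^{m^2}_t$ for every $t$, whence $\tau^{m^1}=\inf\{t:\hat L^{m^1}_t>0\}\wedge T\le\inf\{t:\hat L^{m^2}_t>0\}\wedge T=\tau^{m^2}$ a.s. Finally, $\tau^{m^1}\le\tau^{m^2}$ a.s.\ forces $\condexp{\phi(\tau^{m^1})}{\G}\le\condexp{\phi(\tau^{m^2})}{\G}$ a.s.\ for every bounded increasing $\phi\colon[0,T]\to\R$, i.e.\ $\Phi(m^1)=\mathcal{L}(\tau^{m^1}\mid\G)\le_p\mathcal{L}(\tau^{m^2}\mid\G)=\Phi(m^2)$.

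Having established that $\Phi$ is an order-preserving self-map of the complete lattice $(V,\le_p)$, Tarski's fixed point theorem provides a fixed point $m^*$ --- in fact a smallest and a largest one, which is exactly what the ensuing comparative statics analysis will exploit --- and, as noted above, $(m^*,\tau^{m^*})$ is the sought strong mean-field equilibrium. I expect the main obstacle to be the comparison $L^{m^1}\ge L^{m^2}$: turning the supermartingale assumption on the $g$-difference and the pointwise inequality on $h$ into the correct order between the two Bank--El Karoui processes, and in particular keeping track of the direction of the inequalities as one passes from $L^m$ to $\hat L^m$ and then to $\tau^m$, is the step demanding genuine care, whereas the lattice-theoretic preliminaries and the concluding application of Tarski's theorem are routine and the characterization of $\tau^m$ is already available from Proposition \ref{prop_opttimes_hat_l}.
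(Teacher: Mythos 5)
Your proposal is correct and follows essentially the same route as the paper: the paper's proof verifies the hypotheses of the mean-field Bank--El Karoui representation theorem under monotonicity (Theorem \ref{theo_mfbek_2}, i.e.\ Theorem 2.8 in \cite{mf-bek}), which internally is exactly your Tarski argument applied to $m\mapsto\mathcal{L}(\tau^m\mid\G)$ on a complete lattice of random measures, with the optimality and consistency conditions then read off from Proposition \ref{prop_opttimes_hat_l} and the fixed-point property. The one step you defer --- deducing $L^{m^1}\ge L^{m^2}$ from $f^{m^1}\le f^{m^2}$ and the supermartingale property of $Y^{m^1}-Y^{m^2}$ via the $\essinf$ characterization $L^m_\tau=\essinf_{\sigma>\tau}l^m_{\tau,\sigma}$ --- is carried out explicitly (with the inequalities reversed) in the paper's proof of Lemma \ref{lemma_comparativestatics}(i), and your stated directions of all the inequalities are the right ones.
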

    \noindent The proof of this result is outlined in Appendix \ref{appendix_a}.

    \bigskip We wish now to run a comparative statics analysis over the sets of strong mean-field equilibria with respect to the reward functions. We consider two different optimal stopping problems for $i=1,2$ with given running reward functions $h^1,h^2$ and terminal reward functions $g^1,g^2$, where both couples satisfy Assumption \ref{hp_optstopp}. We make the following additional requirement.
    \begin{assumption}\label{hp_comparativestatics}
        Let $h^1(\cdot)\geq h^2(\cdot)$ and $e^{-\rho t} \,(g^1(t,m)-g^2(t,m) \,)$ be a submartingale for any fixed $m\in\mathbb{L}^0_{\G}(\Omega,\mathcal{P}([0,T]))$.
    \end{assumption}

     \smallskip \noindent For $i=1,2$  consider the two optimal stopping problems
    \begin{equation*}
        \sup_{\tau\in\mathcal{S}} J^i(\tau,  m):=\sup_{\tau\in\mathcal{S} } \,\expect{ \int_0^{\tau} e^{-\rho\, t}\,h^i(t,m)\,dt+ e^{-\rho \tau} g^i(\tau,m) }.
    \end{equation*}
    Thanks to Theorem \ref{theo_optstopp_2}, we known that strong mean-field equilibria exist for both problems. In particular, they may be found as the fixed points of the maps
    \begin{equation*}
         T^i(m):= \mathcal{L}(\, \tau^{m,\,i} \,|\,\G),
    \end{equation*}
    where $\tau^{m,\,i}:=\inf \{t:\, \hat{L}^{m,\,i}>0 \}\wedge T$ and $\hat{L}^{m,\,i}$ solve an appropriate auxiliary Bank-El Karoui's representation problem. With a similar argument, one can prove that the fixed points of the maps
    \begin{equation*}
        S^i(m):= \mathcal{L}(\, \sigma^{m,\,i} \,|\,\G),
    \end{equation*}
    where $\sigma^{m,\,i}:=\inf \{t:\, \hat{L}^{m,\,i}\geq 0 \}\wedge T$, are also strong mean-field equilibria. Notice that $\tau^{m,\,i}$ are the largest optimal stopping times for $J^i(\cdot,m)$, while $\sigma^{m,\,i}$ are the smallest optimal stopping times (see Proposition \ref{prop_opttimes_hat_l} in the Appendix).

    \smallskip \noindent 
    \begin{lemma} \label{lemma_comparativestatics}
        Under Assumption \ref{hp_comparativestatics} and the assumptions of Theorem \ref{theo_optstopp_2}, it holds: 
        \begin{enumerate}
            \item[(i)] for any $m$, $T^1(m)\geq T^2(m)$ and $S^1(m)\geq S^2(m)$;
            \item[(ii)] for any fixed point ${m^{2,*}}$ of $T^2$, there exists a larger fixed point ${m^{1,*}}$ of $T^1$;
            \item[(iii)]   for any fixed point  ${m^{1,*}}$ of $S^1$, there exists a smaller fixed point ${m^{2,*}}$ of $S^2$;
        \end{enumerate}
    \end{lemma}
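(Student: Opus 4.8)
The plan is to establish part (i) first, as a pathwise consequence of the ordering of the solutions to the two Bank-El Karoui representation problems, and then to derive parts (ii) and (iii) by a standard Tarski-type fixed point argument using the monotone maps built in (i). For part (i), the key is a comparison principle for Bank-El Karoui's representation: under Assumption \ref{hp_comparativestatics}, the running reward $f^{m,1}(\omega,t,l)=e^{-\rho t}(h^1(\omega,t,m)+l)$ dominates $f^{m,2}(\omega,t,l)$ pointwise, while the terminal reward difference $Y^{m,1}-Y^{m,2} = e^{-\rho t}(g^1(t,m)-g^2(t,m))$ is a submartingale. I expect that these two facts force $L^{m,1}_t \geq L^{m,2}_t$ a.s. for a.e.\ $t$ (or at least an ordering of the running suprema $\hat L^{m,1} \geq \hat L^{m,2}$); this is precisely the kind of monotone-dependence statement that underlies Theorem \ref{theo_optstopp_2} and should either be quotable from \cite{mf-bek} or provable by the same uniqueness/comparison argument for the representation problem. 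Once $\hat L^{m,1}_t \geq \hat L^{m,2}_t$ pointwise in $t$, the hitting times satisfy $\tau^{m,1} \leq \tau^{m,2}$ and $\sigma^{m,1}\leq\sigma^{m,2}$ a.s., because a larger non-decreasing process crosses any level $0$ no later. Stochastic dominance is preserved under conditioning (for a fixed increasing test function $\phi$, $\E[\phi(\tau^{m,1})\mid\G]\leq\E[\phi(\tau^{m,2})\mid\G]$ since $\phi(\tau^{m,1})\le\phi(\tau^{m,2})$ a.s.), giving $T^1(m) = \mathcal L(\tau^{m,1}\mid\G) \geq_p \mathcal L(\tau^{m,2}\mid\G) = T^2(m)$ and similarly for $S^1,S^2$.

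For part (ii), fix a fixed point $m^{2,*}$ of $T^2$, so $m^{2,*} = T^2(m^{2,*})$. By part (i), $T^1(m^{2,*}) \geq_p T^2(m^{2,*}) = m^{2,*}$, i.e.\ $m^{2,*}$ is a sub-fixed point of $T^1$. I would then restrict the monotone map $T^1$ to the order interval $[m^{2,*}, \bar m]$, where $\bar m$ is the top element of the relevant complete lattice of $\G$-random probability measures (the one constructed in the proof of Theorem \ref{theo_optstopp_2}, which one checks is a complete lattice under $\leq_p$ — here one uses that $(\mathcal P([0,T]),\leq_p)$ is a partially ordered Polish space with a top and bottom, by Kamae-Krengel-O'Brien, and that $\G$-measurability and the lattice operations are compatible). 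On this interval $T^1$ maps into itself by monotonicity ($T^1(m^{2,*})\geq m^{2,*}$ and $T^1(\bar m)\leq\bar m$ trivially), so Tarski's theorem yields a fixed point $m^{1,*}\in[m^{2,*},\bar m]$ of $T^1$, which is a strong mean-field equilibrium for problem $1$ with $m^{1,*}\geq_p m^{2,*}$. Part (iii) is entirely symmetric: given a fixed point $m^{1,*}$ of $S^1$, part (i) gives $S^2(m^{1,*}) \leq_p S^1(m^{1,*}) = m^{1,*}$, so $m^{1,*}$ is a super-fixed point of $S^2$; restricting $S^2$ to the order interval $[\underline m, m^{1,*}]$ (with $\underline m$ the bottom element) and applying Tarski's theorem produces a fixed point $m^{2,*}\leq_p m^{1,*}$.

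The main obstacle I anticipate is the comparison statement $\hat L^{m,1}\geq\hat L^{m,2}$ at the level of the Bank-El Karoui representation: one must verify that the submartingale ordering of the terminal rewards, combined with the pointwise ordering of the running rewards (which include the affine-in-$l$ term), is exactly the right hypothesis to conclude monotone dependence of the representing process $L$. This is essentially the engine behind the monotonicity of $m\mapsto T^i(m)$ already invoked in Theorem \ref{theo_optstopp_2}/Proposition 2.20 of \cite{mf-bek}, so I would structure the proof to reuse that machinery: there one shows $m^1\leq_p m^2$ implies (via the supermartingale/ordering hypotheses on $g,h$) that $\hat L^{m^1}\leq\hat L^{m^2}$, hence $\tau^{m^1}\leq\tau^{m^2}$, hence $T(m^1)\leq_p T(m^2)$; here the roles of the two superscripts are simply played by the two problems $i=1,2$ at the \emph{same} $m$, and Assumption \ref{hp_comparativestatics} plays the role of the ordering hypothesis. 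The remaining ingredients — preservation of $\leq_p$ under conditional expectation, the complete-lattice structure, and the self-map property on order intervals — are routine. A minor point to handle carefully is that $T^i$ and $S^i$ need only be monotone in $m$ on the relevant lattice for Tarski to apply, and that monotonicity is itself part of the proof of Theorem \ref{theo_optstopp_2}, so it may be invoked directly.
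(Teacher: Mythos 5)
Your parts (ii) and (iii) are correct and essentially identical to the paper's argument: starting from $T^1(m^{2,*})\geq_p T^2(m^{2,*})=m^{2,*}$, restrict the monotone map $T^1$ to the complete lattice $\{m\in\randproboptstopp:\ m\geq_p m^{2,*}\}$ and apply Tarski (symmetrically for $S^2$ on $\{m\leq_p m^{1,*}\}$), the monotonicity of $T^1,S^2$ in $m$ being available from the proof of Theorem \ref{theo_optstopp_2}.

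Part (i), however, contains a genuine gap, and the direction of your key comparison is wrong. The core of the lemma is the comparison of the Bank--El Karoui representing processes for the \emph{two reward pairs at the same} $m$, which is not directly quotable from \cite{mf-bek} (there monotonicity is in $m$ for a single reward pair, under a \emph{super}martingale hypothesis); the paper proves it explicitly via the characterization $L^{m,i}_{\tau}=\essinf_{\sigma>\tau} l^{m,i}_{\tau,\sigma}$, where $l^{m,i}_{\tau,\sigma}$ is the $\F_\tau$-measurable solution of $\condexp{\int_\tau^\sigma f^{m,i}(t,l)\,dt}{\F_\tau}=\condexp{Y^{m,i}_\tau-Y^{m,i}_\sigma}{\F_\tau}$, together with a contradiction argument on a positive-probability set. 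Under Assumption \ref{hp_comparativestatics} the submartingale property of $e^{-\rho t}(g^1-g^2)$ makes the right-hand side for $i=1$ dominated by that for $i=2$, and since $h^1\geq h^2$ (so $f^{m,1}\geq f^{m,2}$) this forces $l^{m,1}_{\tau,\sigma}\leq l^{m,2}_{\tau,\sigma}$, hence $\hat L^{m,1}\leq\hat L^{m,2}$ --- the \emph{opposite} of the $\hat L^{m,1}\geq\hat L^{m,2}$ you conjecture. Economically, a larger running reward and a submartingale gain in the terminal reward make waiting more attractive in problem $1$, so $\tau^{m,1}\geq\tau^{m,2}$ and $\sigma^{m,1}\geq\sigma^{m,2}$, which is what yields $T^1(m)\geq_p T^2(m)$ and $S^1(m)\geq_p S^2(m)$. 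Your chain ($\hat L^{m,1}\geq\hat L^{m,2}$, hence $\tau^{m,1}\leq\tau^{m,2}$, hence $\condexp{\phi(\tau^{m,1})}{\G}\leq\condexp{\phi(\tau^{m,2})}{\G}$, ``giving $T^1(m)\geq_p T^2(m)$'') only lands on the stated conclusion because of a second sign slip at the last step: by the definition of $\leq_p$ that inequality would give $T^1(m)\leq_p T^2(m)$. So the one step you defer to the literature is precisely the step that must be proved, and it must be proved with the correct sign, as the paper does.
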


    \smallskip\noindent\textit{Proof of (i).} Fix $m\in\randproboptstopp$. We claim that $\hat{L}^{m, \,1}_{\tau}\leq \hat{L}^{m,\,2}_{\tau}$ a.s. for any stopping time $\tau$. Then, since they are both optional processes we may conclude that 
    $\hat{L}^{m, \,1}_t\leq \hat{L}^{m,\,2}_t$ for any time $t$ a.s.  and by the order-inverting property of the hitting times we have that $T^1(m)\geq T^2(m)$ and $S^1(m)\geq S^2(m)$.

    \smallskip To prove our claim, consider any stopping time $\tau$. Due to Theorem 1 in \cite{bank-elkaroui}, we can assume without loss of generality that the solutions ${L}^{m,\,i}$ have upper right continuous paths, so that
    \begin{equation*}
        L^{m,\,i}_{\tau}=\essinf_{\sigma\in \mathcal{S},\,\sigma >\tau} \,\,l^{m,\,i}_{\tau,\,\sigma},
    \end{equation*}
    where $l^{m,\,i}_{\tau,\,\sigma}$ is the unique $\F_{\tau}$-measurable random variable such that
    \begin{equation}
        \condexp{\int_{\tau}^{\sigma}  f^{m,\,i}(t,l^{m,\,i}_{\tau,\,\sigma})\,dt }{\F_{\tau}}= \condexp{Y^{m,\,i}_{\tau}-Y^{m,\,i}_{\sigma}}{\F_{\tau}}. \label{eq_comparative}
    \end{equation}
     Our claim is now proved if $l^{m,\,1}_{\tau,\,\sigma} \leq l^{m,\,2}_{\tau,\,\sigma}$ for any $\sigma >\tau$. Assume, by contradiction, that
    there exists a stopping time $\sigma$ such that the set $A:=\, \graffe{l^{m,\,1}_{\tau,\,\sigma} > l^{m,\,2}_{\tau,\,\sigma}}$ has positive probability.
    We recall that $Y^{m,\,i}$ and $f^{m,\,i}$ are defined as in the proof of Theorem \ref{theo_optstopp_2}, i.e.
    \begin{align*}
        Y^{m,\,i}_t&:=e^{-\rho\,t}g^i(t,m)-\condexp{e^{-\rho\,T}\,g^i(T,m)}{\F_t}, \\
        f^{m,\,i}(t,l)&:=e^{-\rho t} \,(\,h^i(t,m)+l).
    \end{align*}
    Since the term $\condexp{e^{-\rho\,T}\,g^i(T,m)}{\F_t}$ is a martingale, we can rewrite the right-hand side of (\ref{eq_comparative}) as
    \begin{equation*}
        \condexp{  e^{-\rho\,\tau}\, g^i(\tau,m)-e^{-\rho\,\sigma}g^i(\sigma,m) \,}{\F_{\tau}}.
    \end{equation*}
    Now, since by Assumption \ref{hp_comparativestatics} $e^{-\rho t}\, (g^1(t,m)-g^2(t,m))$ is a submartingale, we have that
    \begin{equation*}
        e^{-\rho \tau}\, (g^1(\tau,m)-g^2(\tau,m))\leq \condexp{e^{-\rho \sigma}\, (g^1(\sigma,m)-g^2(\sigma,m))}{\F_{\tau}},
    \end{equation*}
    which, if plugged into (\ref{eq_comparative}), gives us
    \begin{align*}
        \condexp{\int_{\tau}^{\sigma}  e^{-\rho t} (h^1(t,m)+l^{m,\,1}_{\tau,\,\sigma} \,)\,dt  }{\F_{\tau}} \leq \condexp{\int_{\tau}^{\sigma}  e^{-\rho t} (h^2(t,m)+l^{m,\,2}_{\tau,\,\sigma} \,)\,dt  }{\F_{\tau}}.
    \end{align*}
    Then, since $A\in\F_{\tau}$,
    \begin{equation*}
       \condexp{\indicator{A}\,\int_{\tau}^{\sigma}  e^{-\rho t} \tonde{ \,h^1(t,m)-h^1(t,m)+l^{m,\,1}_{\tau,\,\sigma}-l^{m,\,2}_{\tau,\,\sigma} }  \,dt }{\F_{\tau}}\leq 0,
    \end{equation*}
    but over $A$ the integrand is strictly positive, since $h^1\geq h^2$ by Assumption \ref{hp_comparativestatics}. Hence a contradiction.
    
    \bigskip \noindent\textit{Proof of (ii)}  Let $V':=\,\graffe{ m\in\randproboptstopp:\,\,m\geq_p m^{*,\,2} }$, which is a complete lattice. Due to $(i)$, for any $m\in V'$ we have that 
    \begin{equation*}
        T^1(m)\geq_p T^1(m^{*,\,2})\geq_p T^2(m^{*,\,2})=m^{*,\,2},
    \end{equation*}   
    since $m^{*,\,2}$ is a fixed point for $T^2$. Thus, the restriction $T^1|_{V'}:\, V'\to V'$ is an increasing map over a complete lattice and therefore admits a fixed point $m^{*\,1}$ due to Tarski's theorem. 

     \bigskip \noindent\textit{Proof of (iii)}  The statement is proved with the same argument of Step $(ii)$, after noticing that for any $m\leq_p m^{*,\,1} $
     \begin{equation*}
         S^2(m)\leq_p S^2( m^{*,\,1})\leq_p S^1( m^{*,\,1})= m^{*,\,1}.
     \end{equation*}
    \qedhere

    \smallskip
    \appendix

    \section{}\label{appendix_b}
    We briefly present the mean-field version of the Bank-El Karoui's representation theorem as introduced in \cite{mf-bek}.
    
    \medskip We consider a more general setting than the one introduced in Section \ref{section_problem}. More precisely, here $E$ is a general non-empty Polish space, while in the rest of paper $E$ coincides with $[0,T]$. Again, $\mathbb{L}^0_{\G}\,(\Omega,\mathcal{P}(E))$ is the space of all $\G$-random probability measures over $E$, equipped with the topology induced by the weak convergence in probability. Moreover, let $\mathbb{V}^+$ be the space of all non-decreasing, left-continuous functions $v:[0,T)\to \R \,\cup\,\graffe{-\infty}$ such that $v(0)=-\infty$ and $v$ is real-valued on $(0,T)$. We equip $\mathbb{V}^+$ with the Lévy distance $d_L$, which makes it a Polish space (cf. Proposition C1 in \cite{mf-bek}). We recall that the convergence with respect to $d_L$ coincides with the pointwise convergence on all times $t$ of continuity for the limit function (cf.\  Proposition C1 in \cite{mf-bek}).

    \medskip For any $m\in \randprobspace$, let $Y^m$ be an optional process and $f^m:\Omega\times [0,T]\times\R\to\R$ a function that satisfy the the following assumption.
    
   \begin{assumption} \label{hp_mfbek}
        \begin{enumerate}
            \item[i)] the process $Y^m$ is real-valued, optional, of class (D), upper semicontinuous in expectation with $Y^m_T=0$ a.s.; 
            \item[ii)] for any fixed $(\omega,t)\in\Omega\times [0,T]$, the map $l\to f^m(\omega,t,l)$ is continuous and strictly increasing from $-\infty$ to $+\infty$. For any fixed $l\in\R$, the process $(\omega,t)\to f^m(\omega,t,l)$ is progressively measurable with 
        \begin{equation*}
            \expect{\int_0^{T} \abs{f^m(t,l)}\,dt}<+\infty.
        \end{equation*}
        \end{enumerate}
   \end{assumption}
    Lastly, let $\Psi: \Omega\times \mathbb{V}^+\to E$ be a measurable function. We may now define the solution to the mean-field version of Bank-El Karoui's representation problem.
    \begin{definition} \label{def_beksolution}
        A couple $(L,m)$, where $L$ is an optional process and $m\in \randprobspace$, solves the mean-field version of Bank-El Karoui's representation problem if 
        \begin{align}
            &Y^m_{\tau}=\condexp{ \int_{\tau}^T \,f^m(t, \sup_{v\in [\tau,t)}\,L_v)\,dt }{\F_{\tau}} \quad \text{a.s.} \quad \forall\,\tau\in\mathcal{S}; \label{def_mfbek_1}
            \\ &m=\mathcal{L}(\, \Psi(\hat{L}) \,|\,\G), \label{def_mfbek_2}
         \end{align}
         where $\hat{L}_t:=\sup_{v\in [0,t)} L_v$ for any $t\in[0,T)$.
    \end{definition}
    \begin{remark}
        This definition is slightly different that the corresponding one in \cite{mf-bek}. In particular, the consistency condition lacks the dependence on an underlying process $X^m$.
    \end{remark}

    For each fixed $m\in \randprobspace$, Assumption \ref{hp_mfbek} and Bank-El Karoui's representation theorem (cf. Theorem 3 in \cite{bank-elkaroui}) yield the existence of an optional process $L^m$ which solves the representation problem w.r.t. $Y^m$ and $f^m$, i.e. such that (\ref{def_mfbek_1}) holds. Moreover, the running supremum process $\hat{L}^m$  has paths in $\mathbb{V}^+$ and it is unique up to indistinguishability, so that the right hand term of (\ref{def_mfbek_2}) is well-defined. Now, a couple $(L^m,m)$ solves the mean-field version of Bank-El Karoui's representation problem if $m$ satisfies the fixed point equation 
    \begin{equation}
        m=\mathcal{L} (\,\Psi(\hat{L}^m)\,|\,\mathcal{G}\,).\label{eq_fixedpoint}
    \end{equation}

    \bigskip In a first setting, we wish to use Schauder's theorem to ensure the existence of a solution to (\ref{eq_fixedpoint}). Hence, we require the compactness of the space of all the mean-field interaction terms and the continuity of the map $m\to \mathcal{L} (\,\Psi(\hat{L}^m)\,|\,\mathcal{G}\,)$. While the first prerequisite will be assumed, the second condition will stem from the continuity of the function $\Psi$ and the stability of the solution $\hat{L}^m$ w.r.t. $Y^m$ and $f^m$ (cf. Theorem 3.1 in \cite{mf-bek}).
     \begin{theorem} \label{theo_mfbek_1} (cf. Theorem 2.4 in \cite{mf-bek})
        Under Assumption \ref{hp_mfbek}, let for any $m\in\mathbb{L}^0_{\G}(\Omega,\mathcal{P}(E))$, $Y^m$ have almost surely left upper-semicontinuous paths. Moreover, 
      \begin{enumerate}
            \item[a)] $\G$ is generated by a countable partition of $\Omega$;
            \item[b)] for any $\omega$, the map $\mathbf{l}\to \Psi(\mathbf{l})$ is continuous;
            \item[c)] there exists a compact convex subset $K$ s.t. $\{ \mathcal{L}(\Psi(\hat{L})|\G):\,\,  \hat{L}\in \mathbb{L}^0_{\mathbb{F}}(\Omega,\mathbb{V}^+) \} \subseteq K$;
            \item[d)] for any $l\in\R$ and $m^n,m^{\infty}$ in $K$ such that $m^n\stackrel{wP}{\to}m^{\infty}$, then 
            \begin{equation*}
                \lim_{n\to+\infty} \expect{  \sup_{t\in[0,T]} \abs{Y^{m^n}_t-Y^{m^{\infty}}_t}\,+ \int_0^T \abs{ f^{m^n}(t,l)-f^{m^{\infty}}(t,l)}dt  }=0.
            \end{equation*}
        \end{enumerate}
        Then, there exists a solution to mean-field Bank-El Karoui's representation problem.
    \end{theorem}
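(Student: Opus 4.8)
The plan is to obtain the desired solution as a fixed point of the map
\begin{equation*}
  \Phi\colon \mathbb{L}^0_{\G}(\Omega,\mathcal{P}(E))\to \mathbb{L}^0_{\G}(\Omega,\mathcal{P}(E)),\qquad \Phi(m):=\mathcal{L}\big(\Psi(\hat{L}^m)\,\big|\,\G\big),
\end{equation*}
where, for each fixed $m$, $L^m$ is the Bank--El Karoui solution associated with $(Y^m,f^m)$ and $\hat{L}^m$ its running supremum, both well defined (the latter up to indistinguishability, with paths in $\mathbb{V}^+$) by Assumption \ref{hp_mfbek} and Theorem 3 in \cite{bank-elkaroui}; a fixed point of $\Phi$ is exactly a solution in the sense of Definition \ref{def_beksolution}. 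First I would record the ambient structure supplied by assumption (a): since $\G=\sigma(A_i:i\in\N)$ for a countable partition $(A_i)$, every $\G$-random probability measure is constant on each atom $A_i$, so $\mathbb{L}^0_{\G}(\Omega,\mathcal{P}(E))$ is identified with the countable product $(\mathcal{P}(E))^{\N}$ carrying the product of the weak topologies, under which weak convergence in probability corresponds to coordinatewise weak convergence. As $\mathcal{P}(E)$ is a convex subset of the space of finite signed Borel measures on $E$ topologized by the maps $\mu\mapsto\mu f$, $f\in\hat{C}^b_+(E)$ — a Hausdorff locally convex topological vector space — the product $(\mathcal{P}(E))^{\N}$ sits inside such a space and, being a countable product of the Polish space $\mathcal{P}(E)$, is metrizable. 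By assumption (c), $K$ is then a nonempty compact convex subset of this space which is invariant under $\Phi$, so that the only thing left to check for Schauder's theorem is continuity of $\Phi$ on $K$, and by metrizability it suffices to verify sequential continuity.

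For sequential continuity, I would take $m^n\xrightarrow{wP}m^{\infty}$ in $K$ and chase the convergence through three successive topologies. By assumption (d), $\mathbb{E}\big[\sup_{t}|Y^{m^n}_t-Y^{m^{\infty}}_t|\big]\to0$ and $\mathbb{E}\big[\int_0^T|f^{m^n}(t,l)-f^{m^{\infty}}(t,l)|\,dt\big]\to0$ for every $l\in\R$; these are precisely the hypotheses under which the stability result for the Bank--El Karoui representation — Theorem 3.1 in \cite{mf-bek}, already invoked in Section \ref{subsection_optstopp_cont} — yields $\hat{L}^{m^n}\to\hat{L}^{m^{\infty}}$ in probability with respect to the Lévy distance $d_L$ on $\mathbb{V}^+$. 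Next I would pass this through $\Psi$: given an arbitrary subsequence, extract a further subsequence along which $\hat{L}^{m^n}(\omega)\to\hat{L}^{m^{\infty}}(\omega)$ in $d_L$ for $\pr$-a.e.\ $\omega$; since $\mathbf{l}\mapsto\Psi(\omega,\mathbf{l})$ is continuous by assumption (b), we get $\Psi(\omega,\hat{L}^{m^n}(\omega))\to\Psi(\omega,\hat{L}^{m^{\infty}}(\omega))$ in $E$ for a.e.\ $\omega$, and the subsequence principle gives $\Psi(\hat{L}^{m^n})\to\Psi(\hat{L}^{m^{\infty}})$ in probability in $E$. Finally, for any $f\in\hat{C}^b_+(E)$ boundedness and continuity turn this into $f(\Psi(\hat{L}^{m^n}))\to f(\Psi(\hat{L}^{m^{\infty}}))$ in $L^1(\pr)$; since conditional expectation is an $L^1$-contraction, $\mathbb{E}[f(\Psi(\hat{L}^{m^n}))\,|\,\G]\to\mathbb{E}[f(\Psi(\hat{L}^{m^{\infty}}))\,|\,\G]$ in $L^1$, hence in probability, which by Lemma \ref{lemma_wPconv} is exactly $\Phi(m^n)\xrightarrow{wP}\Phi(m^{\infty})$. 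Thus $\Phi\colon K\to K$ is continuous, and Schauder's fixed point theorem furnishes $m^*\in K$ with $m^*=\mathcal{L}(\Psi(\hat{L}^{m^*})\,|\,\G)$, so that $(L^{m^*},m^*)$ solves the mean-field Bank--El Karoui representation problem.

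The hard part will be the continuity of $m\mapsto\hat{L}^m$, i.e.\ the stability of the Bank--El Karoui representation under perturbations of the pair $(Y^m,f^m)$ — a genuinely delicate step — but under assumption (d) it is available off the shelf as Theorem 3.1 in \cite{mf-bek}, so within this argument it is used as a black box. The remaining difficulties are purely organizational: the $\omega$-dependence of $\Psi$ forces the argument through the subsequence characterization of weak convergence in probability rather than through a direct limit, and one must keep carefully separate the three topologies involved — weak in probability on random measures, $d_L$ in probability on $\mathbb{V}^+$-valued processes, and convergence in probability of $E$-valued random variables — each linked to the next by assumption (d), assumption (b), and Lemma \ref{lemma_wPconv} respectively. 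Nonemptiness, convexity, invariance of $K$, and metrizability are immediate from assumptions (a) and (c).
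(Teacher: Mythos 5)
Your proposal is correct and takes essentially the same route as the paper (and the cited Theorem 2.4 of \cite{mf-bek}): Schauder's fixed point theorem applied to $m\mapsto\mathcal{L}(\Psi(\hat{L}^m)\,|\,\G)$ on the compact convex set $K$, with assumption (a) providing the identification of $\mathbb{L}^0_{\G}(\Omega,\mathcal{P}(E))$ with the countable product $(\mathcal{P}(E))^{\N}$ inside a Hausdorff locally convex space, and continuity obtained by chaining assumption (d) with the stability result (Theorem 3.1 in \cite{mf-bek}), the continuity of $\Psi$, and Lemma \ref{lemma_wPconv}. The paper itself only outlines this argument, deferring details to \cite{mf-bek}, and your write-up supplies exactly those steps.
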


    \medskip In the second setting, we wish to apply Tarski's fixed point theorem to establish existence of a fixed point to Equation (\ref{eq_fixedpoint}). This result requires the additional structure given by partial orders. In particular, it requires the space of all the mean-field interaction terms to be a complete lattice and the monotonicity of the map $m\to \mathcal{L} (\,\Psi(X^m,\hat{L}^m)\,|\,\mathcal{G}\,)$. Hence, we suppose that $E$ is a partially ordered Polish space, with partial order $\leq_E$ and we consider the partial order $\leq_p$ given by the a.s.\ first order stochastic dominance over $\randprobspace$. Lastly, let $\leq_e$ be the partial order on $\mathbb{V}^+$ such that for  any $v_1,v_2$, $v_1\leq_e v_2$ if and only if $v_1(t)\leq v_2(t)\,\,\forall\,t\in[0,T)$.

   \begin{theorem} \label{theo_mfbek_2} (cf. Theorem 2.8 in \cite{mf-bek})
    Under Assumption \ref{hp_mfbek}, assume that
    \begin{enumerate}
        \item[a)]
        for any $l^1\leq_e l^2$, we have $ \Psi(\omega,l^1)\geq_E \Psi(\omega,l^2)$;
    \item[b)] there exists a complete lattice $K$ in $\mathbb{L}^0_{\G}(\Omega,\mathcal{P}(E))$ such that 
    \begin{equation*}
        \graffe{ \mathcal{L}(\Psi(\hat{L})|\G):\quad \hat{L}\in \mathbb{L}^0_{\F}(\Omega,\mathbb{V}^+) } \subseteq K;
    \end{equation*}
    \item[c)] for any $m^1,m^2\in K$, $m^1\leq_p m^2$ implies that
    \begin{equation*}
        Y^{m^1}-Y^{m^2} \,\,\text{is a supermartingale,}\quad \quad  f^{m^1}(\cdot)\leq f^{m^2}(\cdot).
    \end{equation*}
    \end{enumerate}
    Then, there exists a solution $(L,m)$ to the mean-field representation problem.
\end{theorem}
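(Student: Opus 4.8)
The plan is to realize the desired solution $(L,m)$ as a fixed point of the map $\Phi(m):=\mathcal{L}(\Psi(\hat{L}^m)\mid\G)$ and to produce such a fixed point via Tarski's fixed point theorem; the whole argument then reduces to verifying that $\Phi$ restricts to an order-preserving self-map of the complete lattice $K$. For the set-up: for each $m\in\randprobspace$, Assumption \ref{hp_mfbek} together with Bank-El Karoui's theorem (Theorem 3 in \cite{bank-elkaroui}) provides an optional process $L^m$ satisfying (\ref{def_mfbek_1}), whose running supremum $\hat{L}^m$ lies in $\mathbb{V}^+$ and is unique up to indistinguishability, so $\Phi(m)$ is a well-defined element of $\randprobspace$. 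By assumption b), $\Phi(K)\subseteq K$. Once $\Phi\vert_K\colon K\to K$ is shown to be order-preserving for $\leq_p$, Tarski's fixed point theorem yields a fixed point $m^*\in K$, and then $(L^{m^*},m^*)$ solves the mean-field representation problem by Definition \ref{def_beksolution}. Note that, unlike in the Schauder-based Theorem \ref{theo_mfbek_1}, no continuity or compactness of $\Phi$ is required here.

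The core step is the monotonicity of $m\mapsto\hat{L}^m$, which I would run exactly as in the proof of Lemma \ref{lemma_comparativestatics}(i). Take $m^1\leq_p m^2$ in $K$; by c), $Y^{m^1}-Y^{m^2}$ is a supermartingale and $f^{m^1}\leq f^{m^2}$. Using the variational representation of the solution (Theorem 1 in \cite{bank-elkaroui}), write $L^{m^i}_\tau=\essinf_{\sigma\in\mathcal{S},\,\sigma>\tau}l^{m^i}_{\tau,\sigma}$, where $l^{m^i}_{\tau,\sigma}$ is the $\F_\tau$-measurable random variable characterized by $\condexp{\int_\tau^\sigma f^{m^i}(t,l^{m^i}_{\tau,\sigma})\,dt}{\F_\tau}=\condexp{Y^{m^i}_\tau-Y^{m^i}_\sigma}{\F_\tau}$. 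The supermartingale property gives $\condexp{Y^{m^1}_\tau-Y^{m^1}_\sigma}{\F_\tau}\geq\condexp{Y^{m^2}_\tau-Y^{m^2}_\sigma}{\F_\tau}$, hence $\condexp{\int_\tau^\sigma f^{m^1}(t,l^{m^1}_{\tau,\sigma})\,dt}{\F_\tau}\geq\condexp{\int_\tau^\sigma f^{m^2}(t,l^{m^2}_{\tau,\sigma})\,dt}{\F_\tau}$. If $A:=\{l^{m^1}_{\tau,\sigma}<l^{m^2}_{\tau,\sigma}\}\cap\{\sigma>\tau\}\in\F_\tau$ had positive probability, then on $A$ strict monotonicity of $l\mapsto f^{m^1}(\omega,t,l)$ together with $f^{m^1}\leq f^{m^2}$ would force $f^{m^1}(t,l^{m^1}_{\tau,\sigma})<f^{m^2}(t,l^{m^2}_{\tau,\sigma})$ for $t\in[\tau,\sigma)$, and multiplying by $\indicator{A}$ and taking expectations would contradict the previous inequality. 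Thus $l^{m^1}_{\tau,\sigma}\geq l^{m^2}_{\tau,\sigma}$ a.s. for all $\sigma>\tau$, so $L^{m^1}_\tau\geq L^{m^2}_\tau$ a.s. for every $\tau\in\mathcal{S}$; since both processes are optional, the optional section theorem upgrades this to $L^{m^1}\geq L^{m^2}$ up to indistinguishability, whence $\hat{L}^{m^1}\geq_e\hat{L}^{m^2}$ a.s. Then by assumption a), $\hat{L}^{m^2}\leq_e\hat{L}^{m^1}$ yields $\Psi(\hat{L}^{m^1})\leq_E\Psi(\hat{L}^{m^2})$ a.s., so for every bounded increasing measurable $\phi\colon E\to\R$ one has $\phi(\Psi(\hat{L}^{m^1}))\leq\phi(\Psi(\hat{L}^{m^2}))$ a.s., hence $\condexp{\phi(\Psi(\hat{L}^{m^1}))}{\G}\leq\condexp{\phi(\Psi(\hat{L}^{m^2}))}{\G}$ a.s.; that is, $\Phi(m^1)\leq_p\Phi(m^2)$, and Tarski's theorem applies.

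I expect the main obstacle to be Step 2, the comparison principle $\hat{L}^{m^1}\geq_e\hat{L}^{m^2}$: it hinges on the $l_{\tau,\sigma}$-characterization of the Bank-El Karoui solution, requires care with the null sets where $\sigma=\tau$ (handled by intersecting with $\{\sigma>\tau\}$) and with the passage from an ordering holding along every stopping time to an ordering of the processes themselves via the optional section theorem. The remaining ingredients—well-posedness of $\Phi$, the lattice structure granted by assumption b), and the first-order stochastic dominance bookkeeping in the conditioning step—are routine.
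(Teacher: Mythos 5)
Your argument is correct and is exactly the intended route: the paper itself only cites Theorem 2.8 of \cite{mf-bek} for this statement, and your comparison principle for the Bank--El Karoui solutions via the $l_{\tau,\sigma}$-characterization (Theorem 1 in \cite{bank-elkaroui}), upgraded by optional section and combined with Tarski's theorem, is the same argument the paper runs in the proof of Lemma \ref{lemma_comparativestatics}(i) and in the proof sketch of Theorem \ref{theo_optstopp_2}. In particular, your bookkeeping of the two order reversals (for $m^1\leq_p m^2$ one gets $L^{m^1}\geq L^{m^2}$, and $\Psi$ is order-inverting by assumption a)) correctly yields that $m\mapsto\mathcal{L}(\Psi(\hat{L}^m)\,|\,\G)$ is an order-preserving self-map of $K$, so Tarski applies.
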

    \begin{remark}
        The assumptions of Theorem 2.8 in \cite{mf-bek} are here slightly modified and adapted to our setting. In particular, here we assume that $f^m$ is order-preserving w.r.t. $m$, while in \cite{mf-bek} $f^m$ is assumed to be order-inverting.
    \end{remark}

    Lastly, an essential result for our paper is the connection between Bank-El Karoui's representation problem and an optimal stopping problem. This property was thoroughly investigated in \cite{bank_follmer}.
     \begin{proposition}
        \label{prop_opttimes_hat_l}
        For any fixed mean-field interaction term $m$, let $L^m$ be the solution of the representation problem w.r.t. $Y^m$ and $f^m$. Then, for any $l\in \R$        the stopping times
        \begin{align*}
            \sigma_{\,l}^m&:=\, \inf \graffe{t\in [0,T]: \, \hat{L}^m_t\geq l}\wedge T \\
            \tau^m_{\,l}&:=\, \inf \graffe{t\in [0,T]: \, \hat{L}^m_t> l} \wedge T
        \end{align*}
        are the smallest and biggest solution to the optimal stopping problem 
        \begin{equation*}
            \sup_{\tau\in\mathcal{S}} \expect{Y^m_{\tau}+\int_0^{\tau}f^m(t,l)\,dt}.
        \end{equation*}   
    \end{proposition}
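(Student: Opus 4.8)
The plan is to use the Bank--El Karoui representation to rewrite the functional in a form whose maximizers are transparent, and then to identify the value with an explicit quantity built from $\hat L^m$.

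Since $Y^m_T=0$, taking expectations in the representation gives, for every $\tau\in\mathcal{S}$,
\begin{equation*}
\expect{Y^m_\tau+\int_0^\tau f^m(t,l)\,dt}=\expect{\int_0^\tau f^m(t,l)\,dt+\int_\tau^T f^m\Bigl(t,\sup_{v\in[\tau,t)}L^m_v\Bigr)\,dt}=:J_l(\tau).
\end{equation*}
Because $l\mapsto f^m(\omega,t,l)$ is increasing and $\sup_{v\in[\tau,t)}L^m_v\le\hat L^m_t$, the integrand is bounded on $[0,\tau)$ by $f^m(t,l\vee\hat L^m_t)$ and on $[\tau,T)$ by $f^m(t,\hat L^m_t)\le f^m(t,l\vee\hat L^m_t)$, so $J_l(\tau)\le\expect{\int_0^T f^m(t,l\vee\hat L^m_t)\,dt}=:\mathbf v_l$ for all $\tau$. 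The remaining task is to show this bound is attained precisely on the stopping times in the statement. I would first record the pathwise consequences of left-continuity and monotonicity of $\hat L^m$: $\hat L^m_t\le l$ for $t\le\tau^m_l$ and $\hat L^m_t>l$ for $t>\tau^m_l$; $\hat L^m_t<l$ for $t<\sigma^m_l$ and $\hat L^m_t\ge l$ for $t>\sigma^m_l$; hence $\sigma^m_l\le\tau^m_l$; and, for $t>\tau^m_l$, the splitting $\hat L^m_t=\hat L^m_{\tau^m_l}\vee\sup_{v\in[\tau^m_l,t)}L^m_v$ together with $\hat L^m_{\tau^m_l}\le l<\hat L^m_t$ forces $\sup_{v\in[\tau^m_l,t)}L^m_v=\hat L^m_t$.

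Next I would establish optimality and maximality of $\tau^m_l$. Plugging $\tau=\tau^m_l$ into $J_l$: on $[0,\tau^m_l)$ we have $l\vee\hat L^m_t=l$, and on $(\tau^m_l,T)$ the identity above together with $\hat L^m_t>l$ gives $f^m(t,\sup_{v\in[\tau^m_l,t)}L^m_v)=f^m(t,l\vee\hat L^m_t)$; hence $J_l(\tau^m_l)=\mathbf v_l$, so $\tau^m_l$ is optimal and $\sup_{\tau\in\mathcal{S}}J_l(\tau)=\mathbf v_l$. If $\mathbb{P}(\tau>\tau^m_l)>0$, then on that event, for $t\in(\tau^m_l,\tau)$ one has $\hat L^m_t>l$, so $f^m(t,l)<f^m(t,l\vee\hat L^m_t)$ strictly (strict monotonicity of $f^m$ in $l$), making $J_l(\tau)<\mathbf v_l$; thus every optimal $\tau$ satisfies $\tau\le\tau^m_l$ a.s. For $\sigma^m_l$ I would argue by a limiting procedure in $l$: as $l'\uparrow l$ the times $\tau^m_{l'}$ increase to $\sigma^m_l$, and $J_l(\tau^m_{l'})=J_{l'}(\tau^m_{l'})+\expect{\int_0^{\tau^m_{l'}}(f^m(t,l)-f^m(t,l'))\,dt}$, where the correction vanishes by continuity of $f^m$ in its last argument and dominated convergence, while $J_{l'}(\tau^m_{l'})=\mathbf v_{l'}\to\mathbf v_l$ by the same kind of estimate; hence $J_l(\tau^m_{l'})\to\mathbf v_l$. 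Upper semicontinuity in expectation of $Y^m$ along the monotone sequence $\tau^m_{l'}\uparrow\sigma^m_l$ (with integrability of $f^m(\cdot,l)$) then yields $J_l(\sigma^m_l)\ge\limsup_{l'\uparrow l}J_l(\tau^m_{l'})=\mathbf v_l$, so $\sigma^m_l$ is optimal. Minimality, i.e. $\tau\ge\sigma^m_l$ a.s. for every optimal $\tau$, would follow from the dual comparison of an optimal $\tau$ with $\tau\vee\sigma^m_l$, using that on $\{\tau<\sigma^m_l\}$ the supremum $\sup_{v\in[\tau,t)}L^m_v$ stays strictly below $l$ on $(\tau,\sigma^m_l)$.

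The main obstacle is this last point. When $\hat L^m$ has a flat stretch at level exactly $l$ (so that $\sigma^m_l<\tau^m_l$), the ``correction term'' produced by splitting the representation at $\tau\vee\sigma^m_l$ need not vanish, because $\sup_{v\in[\tau,t)}L^m_v=l$ can fail pathwise; closing this gap requires exploiting the finer structure of the Bank--El Karoui solution $L^m$ (e.g.\ its upper-right-continuous version) along the flat stretches of $\hat L^m$, or, more directly, invoking the optimal-stopping characterization of \cite{bank_follmer}, from which the full statement --- both $\sigma^m_l$ and $\tau^m_l$ being optimal and being respectively the smallest and the largest optimal stopping time --- follows.
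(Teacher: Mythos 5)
The paper does not actually prove this proposition: it is quoted from the literature (the discussion before the statement defers to \cite{bank_follmer}), so there is no in-paper argument to compare against. Your proof is essentially the standard one for this result, and its core is sound: the representation with $Y^m_T=0$ gives the universal upper bound $J_l(\tau)\le \mathbf v_l=\expect{\int_0^T f^m(t,l\vee\hat L^m_t)\,dt}$, the pathwise identity $\sup_{v\in[\tau^m_l,t)}L^m_v=\hat L^m_t$ for $t>\tau^m_l$ gives $J_l(\tau^m_l)=\mathbf v_l$, strict monotonicity of $f^m(\omega,t,\cdot)$ gives maximality, and the limit $\tau^m_{l'}\uparrow\sigma^m_l$ as $l'\uparrow l$ together with upper semicontinuity in expectation gives optimality of $\sigma^m_l$. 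That last step is in fact the right move, and for the right reason: a direct evaluation of $J_l(\sigma^m_l)$ genuinely can fail on a flat stretch of $\hat L^m$ at level $l$, since there $\sup_{v\in[\sigma^m_l,t)}L^m_v$ need not equal $l$; the USC assumption is what forces equality in expectation.

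The one place where you stop short --- minimality of $\sigma^m_l$ --- is not actually an obstacle, and the flat-stretch worry is misplaced there. If $\pr(\tau<\sigma^m_l)>0$, then for $t\in(\tau,\sigma^m_l)$ one has $\sup_{v\in[\tau,t)}L^m_v\le\hat L^m_t<l$ (strictly, by the very definition of $\sigma^m_l$ as the hitting time of $\graffe{\hat L^m\ge l}$), hence $f^m\bigl(t,\sup_{v\in[\tau,t)}L^m_v\bigr)<f^m(t,l)=f^m(t,l\vee\hat L^m_t)$ on a set of positive $dt\otimes\pr$-measure, so $J_l(\tau)<\mathbf v_l=\sup_\sigma J_l(\sigma)$ and $\tau$ cannot be optimal. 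This is exactly the same one-sided strict comparison you already use for maximality; the flat stretch of $\hat L^m$ at level $l$ lives in $[\sigma^m_l,\tau^m_l]$ and never enters times $t<\sigma^m_l$. There is no need to compare $\tau$ with $\tau\vee\sigma^m_l$ or to split the representation at $\tau\vee\sigma^m_l$, and no need to invoke finer structure of $L^m$ beyond what you have already used. With that observation your argument is complete; the only remaining housekeeping is the (standard) integrability of $t\mapsto f^m(t,l\vee\hat L^m_t)$, which follows from the integrability of $f^m(\cdot,l)$ and of $f^m(\cdot,\hat L^m_\cdot)$ supplied by the representation at $\tau=0$.
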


    \section{} \label{appendix_a}

    \subsection{Proofs of the results in Section \ref{subsection_optstopp_cont}}

     \noindent\\ \textbf{Proof of Lemma \ref{lemma_contopttimes}.}  Due to Proposition C1 in \cite{mf-bek}, Lévy convergence implies pointwise convergence of $v_n(t)$ to $v(t)$ for any point of continuity $t$ of $v$ in $(0,T)$. Since $v$ is increasing, it has at most a countable set of jump discontinuities, so the set $\Pi$ of continuity points is dense in $(0,T)$. 
        
        \smallskip Assume $\tau<T$. Then for any fixed $\delta >0$, there exist a time $t_1\in\Pi\cap(\tau,\tau+\delta)$  due to the density of $\Pi$. In particular, 
        \begin{equation*}
            \lim_{n\to+\infty} v_n(t_1)=v(t_1)>l
        \end{equation*}
        and thus definitely $v_n(t_1)>l$, which implies that $\tau^n\leq t_1$. Similarly, due to the density of $\Pi$ and the strict monotonicity of $v$, there exists a time $t_2\in\Pi\cap(\tau-\delta,\tau)$ such that $v(t_2)<l.$ Repeating the previous argument, we conclude that $\tau^n\geq t_2$ and thus defintely
        \begin{equation*}
           \tau-\delta< \tau^n<\tau+\delta.
        \end{equation*}              

        \smallskip Assume $\tau=T$, then clearly $v(t)\leq l$ for any $t\in[0,T)$. Assume by contradiction that there exists a subsequence $n_k$ and a $M<T$ such that $\tau^{n_k}<M$ for any $k$. By density, there exists a time $s\in \Pi\cap (M,T)$: then $v_{n_k}(s)>l$ and 
        \begin{equation*}
            l\leq\lim_{k\to+\infty} v_{n_k}(s)=v(s)\leq l,
        \end{equation*}
        i.e. $v(t)=l$ for any $t\in[s,T)$, which contradicts the strict monotonicity of $v$.
    \qed

    \bigskip \medskip \noindent \textbf{Proof of Theorem \ref{theo_optstopp}.} We just give an outline of our modifications to the proof of Proposition 2.15 in \cite{mf-bek}. In particular, we extend their proof to allow also for the case $T=\infty$. For a more detailed discussion, we refer to the original in \cite{mf-bek}.

    \smallskip \noindent \textit{Step a.} Fix $\varepsilon>0$ and let     
    \begin{equation*}
        \delta_{\varepsilon}:=\frac{\varepsilon}{3\int_0^{T}te^{-\rho t}\,dt}<+\infty.
    \end{equation*}
     For any $m\in V$, let
    \begin{align*}
            Y^m_t&:=e^{-\rho\,t}g(t,m)-\condexp{e^{-\rho\,T}\,g(T,m)}{\F_t},\\
              f^m(t,l)&:=e^{-\rho t} \,(\,h(t,m)+l)\\
             \Psi(\textbf{l})&:= \tau^{ \textbf{l},\,\varepsilon }=\inf \graffe{t:\,\, \textbf{l}_t+\delta_{\varepsilon} \,t > 0 }\wedge T.
         \end{align*}
    Due to Assumption \ref{hp_optstopp}, for any $m\in\randproboptstopp$ the couple $(Y^m, f^m)$ satisfies the assumption of the original Bank-El Karoui's representation theorem, i.e.\ Assumption \ref{hp_mfbek}. 

    \medskip \noindent\textit{Step b.} The assumptions of Theorem \ref{theo_mfbek_1}, one of the mean-field versions of the Bank-El Karoui's representation theorem, hold. The first and the last clearly hold due to our assumptions. For the continuity of $\Psi$, let $\mathbf{l}^n,\mathbf{l}$ in $\mathbb{V}^+$ s.t. $d_L(\mathbf{l}^n,\mathbf{l})\to 0$. Then, $\tau^{\mathbf{l}^n,\,\varepsilon}\to\tau^{\mathbf{l},\,\varepsilon}$ due to the strict monotonicity of $\mathbf{l}+\delta_{\varepsilon}\,id$ and Lemma \ref{lemma_contopttimes}. Lastly, let $K:=\, \overline{  conv( {\mathcal{H}})}$, where $conv({\mathcal{H}})$ denotes the convex hull and 
    \begin{equation*}
             \mathcal{H}:=\graffe{ \mathcal{L}(\,\tau\,|\,\G):\,\,\tau\in\mathcal{S} }.
    \end{equation*}
    Then,  $K$ is non-empty, convex and compact, due to Prokhorov's theorem and the fact that $[0,T]$ is compact.

    \medskip\noindent\textit{Step c.} Due to Theorem \ref{theo_mfbek_1}, there exists a solution $(L^{*,\,\varepsilon},m^{*,\,\varepsilon})$ to the mean-field Bank-El Karoui's representation problem. We define the stopping time
    \begin{equation*}
        \tau^{*,\,\varepsilon}:=\inf \graffe{t\geq 0:\, \hat{L}^{*,\,\varepsilon}_t+\delta_{\varepsilon} \,t> 0 }\wedge T.
    \end{equation*}
    Then, by definition of solution to the mean-field version of Bank-El Karoui's problem (cf. Definition \ref{def_mfbek_1}), we have that
    \begin{equation*}
        m^{*,\,\varepsilon}=\mathcal{L}( \, \tau^{*,\,\varepsilon}|\,\G),
    \end{equation*}
    i.e.\ the consistency condition of a strong $\varepsilon$-mean-field equilibrium according to Definition \ref{def_mfequilibrium}. To prove that $( \tau^{*,\,\varepsilon}, m^{*,\,\varepsilon})$ is a strong $\varepsilon$-mean-field equilibrium, it is sufficient to prove that 
      \begin{equation}
            \sup_{\tau\in\mathcal{S}} J(\tau,m^{*,\,\varepsilon})-J(\tau^{*,\,\varepsilon},m^{*,\,\varepsilon}) \leq\varepsilon.\label{eq_dimtheo_extra}
    \end{equation}
    Due to Proposition \ref{prop_opttimes_hat_l}, we know that the left-hand side of the last inequality coincides with $J(\sigma^{*,\,\varepsilon},m^{*,\,\varepsilon})-J(\tau^{*,\,\varepsilon},m^{*,\,\varepsilon})$, where
    \begin{equation*}
        \sigma^{*,\,\varepsilon}=\graffe{t\geq 0:\, \hat{L}^{*,\,\varepsilon}_t> 0 }\wedge T.
    \end{equation*}
    Then, using Bank-El Karou's representation, we can rewrite the left-hand side as
    \begin{align*}
             \abs{ \sup_{\tau\in\mathcal{S}}J(\tau,m^{*,\,\varepsilon})-J(\tau^{*,\,\varepsilon},m^{*,\,\varepsilon}) }\leq \,\,&\expect{\int_0^T e^{-\rho t} \abs{\hat{L}^{*,\,\varepsilon}_t\vee 0 \,-\,(\hat{L}^{*,\,\varepsilon}_t+\delta_{\varepsilon}\,t)\vee 0} \,dt}\\
             &+\, \expect{\int_0^T e^{-\rho t} \abs{(\hat{L}^{*,\,\varepsilon}_t+\delta_{\varepsilon}\,t)\vee 0\,-\, \indicator{t>\tau^{*,\,\varepsilon}}\, \hat{L}^{*,\,\varepsilon}_t} \,dt}\\
             &+ \expect{\int_0^T e^{-\rho t} \,\indicator{t>\tau^{*,\,\varepsilon}} \abs{ \hat{L}^{*,\,\varepsilon}_t \,-\, \sup_{v\in[\tau^{*,\,\varepsilon},t)}L^{*,\,\varepsilon}_v } \,dt} \\
             &\leq 3\,\delta_{\varepsilon} \int_0^Tte^{-\rho t}\,dt=\varepsilon.            
        \end{align*}
    The last inequality results from the fact that all the absolute value terms are bounded from above by $\delta_{\varepsilon}\,t$. This is trivial for the first two, while for the last fix a time $t>\tau^{*,\,\varepsilon}$. By definition of $\tau^{*,\,\varepsilon}$ as the first hitting time of $\hat{L}^{*,\,\varepsilon}_t+\delta_{\varepsilon}\,t$, for any $s<\tau^{*,\,\varepsilon}$ one has $L^{*,\,\varepsilon}_s\leq -\delta_{\varepsilon}\,s\leq 0$ while $\sup_{v\in[\tau^{*,\,\varepsilon},t) }L^{*,\,\varepsilon}_v\geq -\delta_{\varepsilon}\,\tau^{*,\,\varepsilon} $. Combining them, we get $\sup_{v\in[\tau^{*,\,\varepsilon},t) }L^{*,\,\varepsilon}_v\geq \hat{L}^{*,\,\varepsilon}_{\tau^{*,\,\varepsilon}}-\delta_{\varepsilon}\,\tau^{*,\,\varepsilon}.$ Therefore, since $\hat{L}^{*,\,\varepsilon}_t=\,\hat{L}^{*,\,\varepsilon}_{\tau^{*,\,\varepsilon}}\,\vee \, \sup_{v\in[\tau^{*,\,\varepsilon},t)}L^{*,\,\varepsilon}_v$, the term $|\hat{L}^{*,\,\varepsilon}_t \,-\, \sup_{v\in[\tau^{*,\,\varepsilon},t)}L^{*,\,\varepsilon}_v |$ is bounded by $\delta_{\varepsilon}\,\tau^{*,\varepsilon}$ and thus by $\delta_{\varepsilon}\,t$.

    \bigskip \subsection{Proofs of the results Section \ref{section_optstopp_mon}}

    \noindent\\ \textbf{Proof of Theorem \ref{theo_optstopp_2}.} \textit{Step a.} For any $m\in \randproboptstopp$, let
        \begin{align*}
            Y^m_t&:=e^{-\rho\,t}g(t,m)-\condexp{e^{-\rho\,T}\,g(T,m)}{\F_t},\\
            f^m(t,l)&:=e^{-\rho t} \,(\,h(t,m)+l),\\
            \Psi(\omega,\textbf{l})&:=\tau^{ \textbf{l} }=\inf\graffe{t\geq 0: \mathbf{l}_t>0}
        \end{align*}
        Due to Assumption \ref{hp_optstopp}, the couple $(Y^m, f^m)$ satisfies the assumption of the original Bank-El Karoui's representation theorem, i.e.\ Assumption \ref{hp_mfbek}. 

        \medskip \textit{Step b.} The assumptions of Theorem \ref{theo_mfbek_2}, one of the mean-field versions of the Bank-El Karoui's representation theorem, hold. Indeed, the monotonicity of $\Psi$ results from the fact that the first hitting time $\tau^l$ is decreasing w.r.t. the paths. We define 
        \begin{equation*}
            K:=\,\graffe{\mathcal{L}(\, \tau\, |\,\G):\,\,\tau\in\mathcal{S}},
        \end{equation*}
        which is a complete lattice. Finally, last assumption of Theorem \ref{theo_mfbek_2} is a consequence of the last assumption in the statement of this theorem and the fact that
        \begin{equation*}
            \condexp{e^{-\rho\,T}\,g(T,m^1)}{\F_t}-\condexp{e^{-\rho\,T}\,g(T,m^2)}{\F_t}
        \end{equation*}
        is a martingale for any $m_1,m_2\in\randproboptstopp$.

        \medskip \textit{Step c.} Due to Theorem \ref{theo_mfbek_2}, there exists a solution $(L^*,m^*)$ to the mean-field Bank-El Karoui's representation problem. We define
        \begin{equation*}
            \tau^*:=\Psi(\hat{L}^*)=\inf\graffe{t\geq 0: \hat{L}^*_t>0}.
        \end{equation*}
        Then, by definition of solution to the mean-field version of Bank-El Karoui's problem (cf. Definition \ref{def_mfbek_1}), we have that
        \begin{equation*}
            m^*=\mathcal{L}( \tau^* \,|\,\G),
        \end{equation*}
        i.e.\ the consistency condition of a strong mean-field equilibrium according to Definition \ref{def_mfequilibrium}. The optimality of $\tau^*$ for $J(\cdot,m^*)$ results from Proposition \ref{prop_opttimes_hat_l}.\qedhere
     
    \smallskip    
    
\bigskip 
\textbf{Acknowledgements.}  
Financial support by the German Research Foundation (DFG) [RTG 2865/1 - 492988838] is gratefully acknowledged.

{\normalsize

\bibliographystyle{siam}
\bibliography{biblio}
}


 \end{document}